\documentclass[12pt,a4paper]{amsart}
\usepackage[top=35mm, bottom=35mm, left=30mm, right=30mm]{geometry}
\usepackage{mathptmx}
\usepackage{mathrsfs}
\usepackage{amscd}
\usepackage{verbatim}
\usepackage{url}
\usepackage[all]{xy}
\usepackage{color}
\usepackage[colorlinks=true,citecolor=blue]{hyperref}
\usepackage{tikz}
\usetikzlibrary{arrows}
\usetikzlibrary{patterns}
\usetikzlibrary{lindenmayersystems,backgrounds}

\theoremstyle{plain}

\newtheorem{thm}{Theorem}[section]
\newtheorem{lem}[thm]{Lemma}
\newtheorem{prop}[thm]{Proposition}

\theoremstyle{definition}

\newtheorem{ques}{Question}

\newtheorem{exa}[thm]{Example}



\begin{document}
\title{Rigidity for higher rank lattice actions on dendrites}

\author[E.~Shi]{Enhui Shi}
\address[E. Shi]{Soochow University, Suzhou, Jiangsu 215006, China}
\email{ehshi@suda.edu.cn}

\author[H.~Xu]{Hui Xu}
\address[H. Xu]{CAS Wu Wen-Tsun Key Laboratory of Mathematics, University of Science and
Technology of China, Hefei, Anhui 230026, China}
\email{huixu2734@ustc.edu.cn}


\keywords{higher rank lattice, orderable group, dendrite, periodic point}

\subjclass[2010]{54H20, 37B20}

\maketitle


\begin{abstract}
We study the rigidity in the sense of Zimmer for higher rank lattice actions on dendrites and show that: (1) if $\Gamma$ is a higher rank lattice and  $X$ is a nondegenerate dendrite with no infinite order points, then any action of $\Gamma$ on $X$ cannot be almost free; (2) if $\Gamma$ is further a finite index subgroup of $SL_n(\mathbb Z)$ with $n\geq 3$, then every action of $\Gamma$ on $X$ has a nontrivial almost finite subsystem.  During the proof, we get a new characterization of the left-orderability of a finitely generated
group  through its actions on dendrites.
\end{abstract}

\pagestyle{myheadings} \markboth{E. Shi and H. Xu} {Higher rank lattice actions }

\section{Introduction}

By a {\it higher rank lattice}, we mean a lattice in a connected real simple Lie group with finite center and with $\mathbb{R}$-rank at least two.
All group actions are assumed to be continuous. The one-dimensional Zimmer's rigidity conjecture says that every action on the circle by a higher rank lattice must
factor through a finite group action; this is equivalent to saying that every orbit of such actions is finite. Burger-Monod, and Ghys proved independently
the existence of finite orbits for higher rank lattice actions on the circle (\cite{Bu, Gh99}). This translates the conjecture into an equivalent form: no
higher rank lattice is left-orderable (a group is {\it left-orderable} if it admits a total ordering which is invariant by left translations).
The latter  was solved positively by Witte-Morris  for finite index subgroups of $SL_{n}(\mathbb Z)$ with $n\geq 3$ and by Deroin-Hurtado
for any higher rank lattice (\cite{Wi2, DH}).
\medskip

A {\it dendrite} is a Peano curve containing no simple closed curves. Group actions on dendrites are closely related
to the study of $3$-dimensional hyperbolic geometry (see e.g. \cite{Bo, Mi}). From the definition, we see that dendrites and the circle are
opposite to each other in topologies. So, it is interesting to compare the similarities and differences between group actions on these two
classes of curves. Similar to the previous results by Burger-Monod and Ghys, Duchesne-Monod proved the existence of finite orbits for every
higher rank lattice action on dendrites (\cite{DM2}). However, the exact analogy to the rigidity results by Witte-Morris and Deroin-Hurtado mentioned above
do not hold anymore for higher rank group actions on dendrites; in fact, it is easy to see that every countable infinite group can act faithfully on
a starlike dendrite of infinite order; even if the dendrite $X$  we considered is assumed to be of finite order, there can still exist a faithful higher rank lattice action on it
(see Section \ref{examples} for examples). One may consult \cite{AN, DM1, GM, SY} for some related discussions around dendrite homeomorphism groups.

\medskip

Although the exact analogy to the Zimmer's rigidity does not hold for higher rank lattice actions on dendrites, the following theorems we obtained
indicate that such actions are very restrictive.
We call an action of group $G$ on a topological space $X$ {\it almost free} if
for every $g\in G\setminus\{ {\rm id}\}$, the fixed point set ${\rm Fix}(g)$ of $g$ is totally disconnected.

\begin{thm}\label{main theorem 1}
Let $\Gamma$ be a higher rank lattice and $X$ be a nondegenerate dendrite without infinite order points. Then any $\Gamma$ action
on $X$ cannot be almost free.
\end{thm}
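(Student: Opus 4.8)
The plan is to argue by contradiction: assuming a higher rank lattice $\Gamma$ acts almost freely on such an $X$, I would manufacture a finite index subgroup that is left-orderable, which is impossible, since a finite index subgroup of a higher rank lattice is again a higher rank lattice and hence not left-orderable by Deroin--Hurtado \cite{DH} (or by Witte--Morris \cite{Wi2} in the $SL_n$ case). The first observation is that the almost free hypothesis already forces the action to be \emph{faithful}: for $g\neq \mathrm{id}$ the set $\mathrm{Fix}(g)$ is totally disconnected, so it cannot coincide with the nondegenerate continuum $X$, whence $g$ moves some point. Next, by Duchesne--Monod \cite{DM2} the action admits a finite orbit $F$; its pointwise stabilizer $\Gamma_0$ is a finite index subgroup of $\Gamma$ (still a higher rank lattice) that fixes every point of $F$. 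The favourable situation is $|F|\geq 2$: then $\Gamma_0$ fixes two distinct points $p\neq q$, and since $X$ is uniquely arcwise connected, the arc $[p,q]$ satisfies $g[p,q]=[gp,gq]=[p,q]$ for every $g\in\Gamma_0$, so it is a $\Gamma_0$-invariant arc.

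Given such an invariant arc, I would finish as follows. Identify $[p,q]$ with $[0,1]$; each $g\in\Gamma_0$ fixes both endpoints, hence restricts to an orientation-preserving homeomorphism of the arc, giving a homomorphism $\Gamma_0\to \mathrm{Homeo}^+([0,1])$. Its kernel consists of elements fixing the whole arc $[p,q]$ pointwise; but a nondegenerate arc is connected, so it cannot be contained in the totally disconnected set $\mathrm{Fix}(g)$ unless $g=\mathrm{id}$. Thus the homomorphism is injective and $\Gamma_0$ embeds in $\mathrm{Homeo}^+([0,1])\cong\mathrm{Homeo}^+(\mathbb{R})$, so the countable group $\Gamma_0$ is left-orderable --- the desired contradiction. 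This is precisely the ``easy'' direction of the left-orderability characterization advertised in the abstract: for a countable group, a faithful action on an arc fixing its endpoints is the same as left-orderability, and the almost free hypothesis is exactly what upgrades an invariant arc into a \emph{faithful} interval action.

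The main obstacle is the remaining case in which every finite orbit is a single point, so that the above procedure produces only one global fixed point $p$ and no invariant arc. Here I would exploit the hypothesis that $X$ has no infinite order points: pass to the closure $Y$ of one component of $X\setminus\{p\}$ (there are finitely many, permuted by $\Gamma_0$, so a further finite index subgroup $G$ fixes one) to obtain a faithful, almost free $G$-action on a nondegenerate subdendrite $Y$ with $p$ as an \emph{endpoint}, and then try to locate a second $G$-fixed point or fixed end of $Y$ --- for instance through the induced $G$-action on the (finite-order) branch points and on the end space of $Y$, or through a minimal nonempty invariant subdendrite --- so as to again produce an invariant arc and reduce to the previous paragraph. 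Finite branching, guaranteed by the absence of infinite order points, is what makes this combinatorial/end-theoretic analysis well defined, and I expect this is exactly where the hypothesis is indispensable. Securing this second invariant fixed point (equivalently, an invariant arc) in the unique-fixed-point case is the step I would spend the most effort on; once it is in hand, almost-freeness converts it into a faithful interval action and the Deroin--Hurtado theorem closes the argument.
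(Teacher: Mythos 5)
Your handling of the case of a finite orbit with at least two points (equivalently, an invariant arc) is correct and matches the paper's Case 1: almost freeness makes the restriction of the stabilizer to the invariant arc faithful, and a faithful orientation-preserving action on an arc by a higher rank lattice contradicts Deroin--Hurtado. The genuine gap is in the remaining case, where the only finite orbit is a single global fixed point $p$. After passing to a finite index subgroup $G$ preserving a component $C$ of $X\setminus\{p\}$ and viewing $p$ as an end point of $Y=\overline{C}$, you propose to ``locate a second $G$-fixed point'' so as to manufacture an invariant arc and fall back on the previous case. There is no reason such a second \emph{global} fixed point should exist: Lemma~\ref{fixed point} produces one for a single homeomorphism fixing an end point, and Proposition~\ref{fixed arcs} extends this to finitely generated nilpotent groups, but for a general finitely generated group acting (even almost freely) on a dendrite with a fixed end point the common fixed point set can reduce to the end point itself --- if a second common fixed point were automatic, the machinery of Sections 3--4 of the paper would be unnecessary. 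Your fallback suggestions (end-space analysis, minimal invariant subdendrites) are not developed enough to close this; a minimal invariant subdendrite can simply be $\{p\}$ again, and Duchesne--Monod applied to $Y$ may return only the orbit $\{p\}$.

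The paper's resolution of this case is the $(\Longleftarrow)$ direction of Proposition~\ref{char dendrite action}: an almost free action of a finitely generated group on a nondegenerate dendrite fixing an end point $z$ already forces left-orderability, with no invariant arc required. The idea is local rather than global. Each generator $g_i$ separately fixes some $t_i\neq z$ by Lemma~\ref{fixed point}, so $I=\bigcap_i [z,t_i]=[z,t]$ is a nondegenerate arc because $z$ is an end point; for any finite set $B$ of group elements one can choose $s\in(z,t)$ with $g([z,s])\subset[z,t)$ for all $g\in B$, and each such $g$ then acts on $(z,s)$ preserving the canonical order of $[z,t]$. Comparing elements of $B$ by their action on a dense sequence in $(z,s)$ (almost freeness guarantees that distinct elements eventually disagree there) yields a total order on $B$ invariant under left multiplication by a prescribed finite set, and the compactness argument of Lemma~\ref{finitely orderable} assembles these partial orders into a genuine left-ordering of the group. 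This local-to-global step is the idea missing from your proposal; once you have it, your Deroin--Hurtado endgame goes through exactly as you describe.
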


\medskip

Actually, Theorem \ref{main theorem 1} holds for any irreducible lattices of a real connected semisimple Lie group with finite center and $\mathbb{R}$-rank$\geq 2$. When we consider some specified higher rank lattices, a stronger form of rigidity can be shown. We introduce the notion of almost finite action in Section 2
to describe such rigidity. Roughly speaking, an almost finite action is a kind of limit of finite actions.

\begin{thm}\label{main theorem 2}
Let $X$ be a nondegenerate dendrite without infinite order points and $\Gamma$ be a finite index subgroup of $SL_{n}(\mathbb{Z})$ with $n\geq 3$. Then any action of $\Gamma$ on $X$ admits a $\Gamma$-invariant nondegenerate  subdendrite $Y$ such that the subsystem $(Y, \Gamma|Y)$ is almost finite  and the first point map $r: X\rightarrow Y$ is a contractible extension.
\end{thm}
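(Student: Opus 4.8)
The plan is to locate a canonical invariant subdendrite $Y$ carrying all of the recurrent dynamics and to show that, after collecting every point with finite orbit, the action on $Y$ is forced to be a limit of finite actions. First I would invoke the Duchesne--Monod finite orbit theorem to produce a finite $\Gamma$-orbit $F$ and pass to its convex hull $\langle F\rangle$, which is a finite subtree (finite because $X$ has no infinite order points) and is $\Gamma$-invariant. Replacing $\Gamma$ by the finite-index subgroup $\Gamma'$ that fixes each of the finitely many vertices of $\langle F\rangle$, everything reduces to understanding how $\Gamma'$ acts on the individual edges of $\langle F\rangle$.

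The heart of the argument is a left-orderability obstruction upgrading the non-almost-freeness of Theorem \ref{main theorem 1} to outright triviality on arcs. If a finite-index subgroup $\Gamma'$ of $SL_n(\mathbb{Z})$ fixes both endpoints of an arc $J\subseteq X$, then restriction to $J\cong[0,1]$ gives a homomorphism $\Gamma'\to \mathrm{Homeo}^{+}([0,1])$ whose image is a torsion-free left-orderable quotient of $\Gamma'$. By the Margulis normal subgroup theorem every nontrivial quotient of $\Gamma'$ is either finite or again (essentially) a higher rank lattice, and by Witte--Morris together with Deroin--Hurtado no nontrivial such quotient is left-orderable; hence the image is trivial and $\Gamma'$ fixes $J$ pointwise. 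Applying this edge by edge shows $\Gamma'$ fixes $\langle F\rangle$ pointwise, so every point of $\langle F\rangle$ has finite $\Gamma$-orbit. I would then let $Y$ be the subdendrite spanned by the set $P$ of all points with finite $\Gamma$-orbit, equivalently the closure of an increasing exhaustion $\langle F_1\rangle\subseteq\langle F_2\rangle\subseteq\cdots$ by finite invariant subtrees; invariance is clear, and presenting $Y$ as the inverse limit of these finite subsystems under the successive first point maps $\langle F_{k+1}\rangle\to\langle F_k\rangle$ is exactly what exhibits $(Y,\Gamma|Y)$ as almost finite in the sense of Section 2.

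Two points then remain. For nondegeneracy of $Y$ I must rule out that $P$ is a single point, i.e. that $\Gamma$ has a unique fixed point $p$ and no other finite orbit (two distinct fixed points already span an arc fixed pointwise, by the obstruction above). Here I would combine Theorem \ref{main theorem 1} with a descent: since $p$ has finite order, a finite-index subgroup fixes each of the finitely many directions at $p$ and hence preserves each branch, so Duchesne--Monod applied inside a branch yields a further finite orbit; iterating, either some stage produces a nondegenerate invariant finite subtree (making $Y$ nondegenerate), or the descent produces an action that is almost free along an arc, contradicting Theorem \ref{main theorem 1}. For the claim that the first point map $r\colon X\to Y$ is a contractible extension, I would note that $r$ is the equivariant first point retraction whose fibers are the subdendrites hanging off $Y$, and that by maximality of $Y=\langle P\rangle$ no finite orbit meets $X\setminus Y$, so the fibers carry no nontrivial recurrent dynamics and collapse as required by the definition in Section 2.

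The main obstacle I anticipate is not the arc obstruction, which is clean, but assembling the finite orbits into a genuine \emph{limit of finite actions}: proving that the finite invariant subtrees $\langle F_k\rangle$ exhaust a dense subset of $Y$ and cohere into a compatible inverse system with equivariant bonding maps, and that every minimal set of the $X$-action projects into $Y$. This density and coherence is where the special structure of $SL_n(\mathbb{Z})$ with $n\geq 3$ should enter beyond bare non-left-orderability --- through the congruence and bounded-generation input underlying Witte--Morris --- which is presumably why a general higher rank lattice yields only the non-almost-freeness of Theorem \ref{main theorem 1} rather than the full almost-finite reduction. I expect most of the work to lie in verifying compatibility of the first point maps between successive finite subtrees so that the limiting action is well defined and accounts for all of $Y$.
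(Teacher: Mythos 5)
Your proposal has the right global architecture (a nested family of finite invariant subtrees whose closure is $Y$, presented as an inverse limit under first point maps, with contractibility of fibers coming from infinitude of orbits in $X\setminus Y$), and your arc-triviality observation --- that a finite-index subgroup fixing both endpoints of an invariant arc must fix it pointwise, by Deroin--Hurtado plus Margulis --- is correct and is also used in the paper. But there is a genuine gap at the step you yourself flag as the ``main obstacle,'' and it is not a matter of bookkeeping: the entire construction rests on a quantitatively stronger statement that you never prove, namely the paper's key proposition that \emph{a finite-index subgroup of $SL_n(\mathbb{Z})$ ($n\ge 3$) acting on a nondegenerate dendrite and fixing an end point $z$ must fix a nondegenerate arc $[z,s]$ pointwise}. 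Theorem \ref{main theorem 1} cannot substitute for this: non-almost-freeness only produces a single nontrivial element whose fixed point set contains an arc, with no control over which element, where the arc sits, or whether any finite-index subgroup fixes it. Concretely, your descent argument for nondegeneracy breaks at the first step: after passing to the finite-index subgroup $\Gamma_1$ preserving a branch $C\cup\{p\}$, Duchesne--Monod applied inside the branch may simply return the already-known fixed point $p$, yielding no new finite orbit and no invariant arc, so the iteration need not terminate in either of your two alternatives. The same missing lemma is what guarantees that the finite invariant subtrees keep growing (so that $Y$ is nondegenerate and the inverse system is nontrivial) and that $r(x)$ has infinite orbit for $x\in X\setminus Y$ (without which ``the fibers carry no recurrent dynamics'' does not yield a sequence $g_i$ with $\mathrm{diam}(g_i r^{-1}(y))\to 0$; the paper gets this from pairwise disjoint translated fibers plus the null-sequence property of disjoint subcontinua).

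The paper fills this gap with a Witte--Morris-style argument that is the technical heart of the proof: one chooses six elementary unipotent powers $a_1,\dots,a_6$ in $\Gamma$ satisfying $[a_i,a_{i+1}]=e$ and $[a_{i-1},a_{i+1}]=a_i^{\pm r}$, uses Proposition \ref{fixed arcs} (nilpotent groups fixing an end point have a second fixed point) to get arcs $[z,s_i]$ fixed by each triple, defines left quasi-orders from the induced orders on these arcs, and derives a cyclic chain of relations $a_1\ll a_2\ll\cdots\ll a_6\ll a_1$ forcing each $a_i$ to fix a common subarc pointwise; bounded generation (Tits--Mennicke) then upgrades this from $\langle a_1,\dots,a_6\rangle$ to a finite-index subgroup, and intersecting the finitely many translates of the arc handles all of $\Gamma$. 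This is precisely the ``congruence and bounded-generation input'' you correctly suspect must enter; without supplying it, the proposal establishes neither the nondegeneracy of $Y$, nor the coherence of the inverse system, nor the contractibility of the extension.
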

\medskip

The proof of Theorem \ref{main theorem 1} relies on the non-left-orderability of higher rank lattices by Deroin-Hurtado, the elementarity
of higher rank lattice actions on dendrites by Duchesne-Monod, and an equivalent characterization of the left-orderability of a finitely generated
group obtained by the authors through its actions on dendrites. The proof of Theorem \ref{main theorem 2} is inspired by some key ideas of Witte-Morris in
\cite{Wi2}. To explain the necessity of the conditions in these theorems, we construct some examples in Section \ref{examples}.

\section{Preliminaries}

In this section, we will recall some notions and results around group actions, dendrites, and left-orderability of a countable group.
Particularly, we will introduce the dynamical realization technique which is very useful in proving the left-orderability of a group.

\subsection{Group actions} Let $G$ be a group and $X$ be a topological space. Recall that an {\it action} of $G$ on $X$
is a group homomorphism $\phi:G\rightarrow {\rm Homeo}(X)$, where ${\rm Homeo}(X)$ denotes the homeomorphism group of $X$; we use the pair $(X, G)$ to denote this action and use $gx$ or $g.x$
to denote $\phi(g)(x)$ for $g\in G$ and $x\in X$. If $\ker(\phi)=e_G$, we say the action $(X, G)$ is {\it faithful};
if $\ker(\phi)$ is finite, we say the action is {\it almost faithful}. We also call the action $(X, G)$ a {\it system}. For $x\in X$, the set
$Gx:=\{gx:g\in G\}$ is called the {\it orbit} of $x$; a subset $A$ of $X$ is
called $G$-{\it invariant} if $Gx\subset A$ for any $x\in A$; if $Gx=\{x\}$, then $x$ is called a {\it fixed point} of $G$; we use ${\rm Fix}(G)$ to
denote the fixed point set of $G$.
Suppose $A$ is closed and $G$-invariant. Then we naturally have a restriction action of $G$ to $A$, which is denoted by $(A, G|A)$ and is called a {\it subsystem} of $(X, G)$.
If $\phi(G)$ is finite, we say the action $(X, G)$ is {\it finite}. It follows from the Margulis' normal subgroup theorem (\cite[Chapter IV]{Mar})
that every higher rank lattice action either is finite or is almost faithful. For two actions $(X, G)$ and $(Y, G)$, if there is a continuous
surjection $\phi:X\rightarrow Y$ with $\phi(gx)=g\phi(x)$ for each $g\in G$ and $x\in X$, then $\phi$ is said to be a {\it factor map}
and $(Y, G)$ is said to be a {\it factor} of $(X, G)$  or $(X,G)$ is an {\it extension} of $(Y,G)$; if $\phi$ is additionally a homeomorphism, then $\phi$ is called
a {\it topological conjugation}, and $(X, G)$, $(Y, G)$ are called {\it topologically conjugate}.
 An extension $\phi: (X,G)\rightarrow (Y,G)$ is a {\it contractible extension} if for each $y\in Y$, $\phi^{-1}(y)$ is contractible; that is,
there is a sequence $g_1, g_2,\cdots \in G$ such that ${\rm diam}(g_i\phi^{-1}(y))\rightarrow 0$ as $i\rightarrow \infty$.

\medskip
If $(X_i, G), i=0, 1, 2, \cdots,$ is a sequence
of $G$ actions associated to each $i$ a factor map $\phi_i:X_{i+1}\rightarrow X_i$, then we say that these $(X_i, G)$ together with $\phi_i$'s  form
an {\it inverse system} and call each $\phi_i$ a {\it bonding map}. The inverse limit of this inverse system is defined to the set
\[\underset{\longleftarrow}{\lim}(X_i, G):=\left\{(x_0,x_1,\cdots)\in\prod_{i=0}^{\infty} X_i: \phi_i(x_{i+1})=x_i, \text{ for each }i\right\}\]
together with a specified action by $G$: $g.(x_0, x_1, \cdots)=(gx_0, gx_1, \cdots)$ for each $g\in G$; we use $(\underset{\longleftarrow}{\lim}(X_i, G), G)$
to denote this specified action. It is known that
if each $X_i$ is a compact metric space, then so is $\underset{\longleftarrow}{\lim}(X_i, G)$. We call a group action
$(X, G)$ being {\it almost finite} if it is topologically conjugate to an inverse limit $(\underset{\longleftarrow}{\lim}(X_i, G), G)$
with each $(X_i, G)$ being a finite action.

\subsection{Dendrites} By a {\it continuum} we mean a connected compact metrizable space. A continuum is {\it nondegenerate} if it is not a single point.
An (nondegerate) {\it arc} is a continuum homeomorphic to the closed interval $[0, 1]$. A {\it dendrite} is a continuum that is locally connected and contains no simple closed curve.  In the case of a dendrite $X$, the  Menger-Urysohn order ({\it order} for short) of a point $x\in X$  is just the cardinality of the set of connected components of $X\setminus\{x\}$. A point of $X$ is an {\it end point}, {\it regular point}, and {\it branch point} if its order is one, two, and $\geq 3$, respectively.
If the orders of all points in $X$ have an uniform upper bound, then we say $X$ is of {\it finite order}.
For $a, b\in X$, we use $[a, b]$ to denote the unique arc (may be degenerate) connecting $a$ and $b$; and use
$[a, b), (a, b], (a, b)$ to denote the sets $[a, b]\setminus\{b\}, [a, b]\setminus \{a\}, [a, b]\setminus \{a, b\}$ respectively. It is known that
the end point set of a nondegenerate dendrite $X$ is nonempty; the regular point set of $X$ is dense and uncountable. If $Y$ is a
subdendrite of $X$, then for each $x\in X$, there is a unique $r(x)\in Y$ with $[x, r(x)]\cap Y=\{r(x)\}$; the map $r$ so defined
is called the {\it first point map} from $X$ to $Y$. One may refer to
\cite{Nad} for more details around dendrites.

\begin{lem} \label{fixed point}
Let $X$ a nondegenerate dendrite and $h:X\rightarrow X$ be a homeomorphism. If $h$ fixes an end point $e\in X$, then $h$ fixes another point $o\not=e$.
\end{lem}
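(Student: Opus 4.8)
The plan is to root $X$ at the fixed end point $e$ and argue by contradiction, assuming ${\rm Fix}(h)=\{e\}$. For $x,y\in X$ write $x\preceq y$ if $x\in[e,y]$; since $h$ is a homeomorphism fixing $e$, it carries each arc $[e,y]$ onto $[e,h(y)]$, so $h$ preserves $\preceq$ in both directions. I would also use the meet $x\wedge y:=\mathrm{med}(e,x,y)$, the first common point of $[e,x]$ and $[e,y]$; because a dendrite is a median space, this junction map is continuous jointly in $(x,y)$.

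First I would rule out any $x\ne e$ that is $\preceq$-comparable with its image. If $x\preceq h(x)$ for some $x\ne e$, then applying the order-preserving $h$ repeatedly yields an increasing chain $x\preceq h(x)\preceq h^2(x)\preceq\cdots$; by compactness, together with the fact that in a dendrite the arc between two nearby points is small (so a monotone sequence converges to its supremum), this chain converges to a point $z\succeq x$, whence $z\ne e$, and continuity gives $h(z)=z$, contradicting ${\rm Fix}(h)=\{e\}$. Running the same argument for $h^{-1}$, which also fixes $e$, disposes of the case $h(x)\preceq x$. Hence, under the standing assumption, $x$ and $h(x)$ are $\preceq$-incomparable for every $x\ne e$.

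The crux is then to exploit incomparability through the continuous meet map $m(x):=x\wedge h(x)$. For $x\ne e$ incomparability forces $m(x)\ne x$ and $m(x)\ne h(x)$, so the three arcs $[m(x),e]$, $[m(x),x]$, $[m(x),h(x)]$ are nondegenerate and pairwise meet only at $m(x)$; thus $m(x)$ has order $\ge 3$, i.e. $m(x)$ is a branch point. Moreover $m(x)\ne e$: since $e$ is an end point, $X\setminus\{e\}$ is connected, so $e$ cannot separate $x$ from $h(x)$, which means $[e,x]\cap[e,h(x)]\ne\{e\}$. Therefore $m$ maps the connected space $X$ into $\{e\}\cup B$, where $B$ is the branch-point set, which is at most countable (see \cite{Nad}). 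A continuous map from a connected space into a countable metric space is constant, yet $m(e)=e$ while $m(x)\ne e$ for every $x\ne e$; this contradiction yields a fixed point $o\ne e$.

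I expect the main obstacle to be the dendrite-specific technical inputs rather than the logical skeleton: namely the continuity of the median/meet map and the convergence of $\preceq$-monotone orbits to a fixed point. The decisive idea is the countability of the branch-point set, which turns ``incomparable images everywhere'' into the impossible demand that the continuous image of a connected dendrite be a nontrivial subset of a countable set.
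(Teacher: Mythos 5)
Your proof is correct, but it reaches the conclusion by a genuinely different route from the paper's. The shared core is the monotone iteration: if some $x\neq e$ satisfies $x\preceq h(x)$ (or $h(x)\preceq x$, treated via $h^{-1}$), then the nested arcs $[e,x]\subset[e,h(x)]\subset\cdots$ converge to a fixed point distinct from $e$. Where you differ is in how the existence of such a comparable point is secured. The paper produces one directly and constructively: since $e$ is an end point, $h([e,u])\cap[e,u]$ is a nondegenerate arc $[e,v]$ with $v\neq e$, and then either $h$ or $h^{-1}$ sends some $w\in[e,v]$ to $v$, so $w\preceq h^{\pm1}(w)$; no contradiction hypothesis and no further machinery are needed. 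You instead assume ${\rm Fix}(h)=\{e\}$, deduce that every $x\neq e$ is incomparable with $h(x)$, and kill that scenario by observing that the continuous median map $x\mapsto\mathrm{med}(e,x,h(x))$ would then send the connected space $X$ into the countable set $\{e\}\cup\mathrm{Br}(X)$ nonconstantly. That argument is sound (your verifications that $m(x)\notin\{e,x,h(x)\}$ and hence is a branch point are all correct), but it imports two nontrivial, if standard, dendrite facts --- joint continuity of the median and countability of the branch-point set --- that the paper's two-line observation about $h([e,u])\cap[e,u]$ renders unnecessary. Your version is a nice ``soft'' alternative and isolates clearly where the end-point hypothesis enters (it forces $m(x)\neq e$); the paper's is shorter, more elementary, and directly exhibits the monotone orbit rather than excluding its absence.
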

\begin{proof}
Take a point $u\not=e\in X$. Since $e$ is an end point, $h([e, u])\cap [e, u]=[e, v]$ for some $v\not=e\in X$.
Then there is $w\in [e, v]$ such that either $h(w)=v$ or $h^{-1}(w)=v$. WLOG, we assume that $h(w)=v$.
Then $[e, w]\subset [e, h(w)]\subset [e, h^2(w)]\subset\cdots$. Let $o=\lim\limits_{i\rightarrow\infty} h^i(w)$. Then $o\not=e$ and $h(o)=o$.
\end{proof}

The following proposition will be used later.

\begin{prop}\label{fixed arcs}
Let $G$ be a finitely generated nilpotent group acting on a nondegenerate dendrite $X$. Suppose that $G$ fixes an end point $z$ of $X$. Then $G$ has another fixed point.
\end{prop}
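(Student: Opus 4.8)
The plan is to induct on the nilpotency class of $G$, with the finitely generated abelian case as the base and the center providing the inductive step. Throughout I use the partial order $\preceq$ on $X$ rooted at the fixed end point $z$, defined by $x\preceq y$ iff $x\in[z,y]$; any homeomorphism fixing $z$ preserves $\preceq$ and the associated meet $x\wedge y:=$ the point where $[z,x]$ and $[z,y]$ branch. Two facts about $z$ are used repeatedly. First, since $z$ has order one, for any $a,b\neq z$ the arcs $[z,a]$ and $[z,b]$ share a nondegenerate initial segment, so $a\wedge b\neq z$ (otherwise $z$ would be interior to $[a,b]$ and hence of order $\geq 2$). Second, if a homeomorphism fixes $z$ and two points $a,b$, then it fixes $a\wedge b$, since it maps $[z,a\wedge b]=[z,a]\cap[z,b]$ to itself while fixing the end $z$. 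Consequently, for any subgroup $H$ fixing $z$, the set ${\rm Fix}(H)$ is closed under $\wedge$.

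For the base case, let $G$ be finitely generated abelian and induct on the number of generators, the one-generator case being Lemma \ref{fixed point}; write $G=\langle H,t\rangle$ with $t$ commuting with $H$ and pick $u\in{\rm Fix}(H)\setminus\{z\}$. Since $t$ commutes with $H$, the set $C:={\rm Fix}(H)$ is closed and $t$-invariant. If $tu=u$ we are done; otherwise set $v:=u\wedge tu$, which lies in $C$ (meet of the $H$-fixed points $z,u,tu$) and satisfies $v\neq z$. Now $v=u\wedge tu\preceq tu$ and $tv=tu\wedge t^{2}u\preceq tu$, so $v$ and $tv$ both lie on the arc $[z,tu]$ and are $\preceq$-comparable. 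Replacing $t$ by $t^{-1}$ if necessary, assume $v\preceq tv$; iterating $t$ gives an increasing chain $v\preceq tv\preceq t^{2}v\preceq\cdots$ of points of $C$ (each $t^{i}v\in C$ because $t$ commutes with $H$). By compactness this chain converges to $o:=\sup_{i}t^{i}v$, which is fixed by $t$, lies in $\overline{C}=C={\rm Fix}(H)$, and satisfies $o\succeq v\succ z$. Thus $o$ is a common fixed point of $G$ distinct from $z$.

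For the inductive step, let $Z=Z(G)$ be the nontrivial, finitely generated center. For each $g\in G$ the group $\langle Z,g\rangle$ is abelian, so the base case yields ${\rm Fix}(\langle Z,g\rangle)\supsetneq\{z\}$; in particular ${\rm Fix}(Z)\supsetneq\{z\}$, and ${\rm Fix}(Z)$ is $G$-invariant because $Z$ is central. Passing to the subdendrite $Y:={\rm hull}({\rm Fix}(Z))$, which is $G$-invariant with $z$ a fixed end point, one checks that every end point and every branch point of $Y$ lies in ${\rm Fix}(Z)$, so $Z$ fixes every end point and every branch point of $Y$. I would then locate a finite $G$-orbit in $Y$ other than $\{z\}$: for $u\in{\rm Fix}(Z)\setminus\{z\}$, the meet $\bigwedge_{g\in G}gu$ of the $G$-invariant orbit is a $G$-fixed point, and it settles the matter unless it equals $z$, i.e. unless the orbit accumulates at $z$. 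In the accumulation case the crucial leverage is that $G$ acts on $Y$ by automorphisms of the rooted order $\preceq$: on a $\preceq$-chain of branch points descending to $z$ (which accumulation produces along the initial arc from $z$) an order automorphism is forced to be the identity, so such branch points are individually $G$-fixed. More systematically, on the ``core'' subdendrite obtained by iterating ${\rm hull}(\overline{\{\text{branch and end points}\}})$ the center $Z$ acts trivially, so the quotient $G/Z$, of strictly smaller nilpotency class, acts with $z$ a fixed end point and the outer induction applies; the low-complexity sub-cases (a first branch point off $z$, a free initial edge, or $Y$ a finite tree) are disposed of directly, since a homeomorphism fixing a leaf fixes its unique neighbouring vertex.

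The main obstacle is precisely this accumulation case, where ${\rm Fix}(Z)$ clusters at the fixed end $z$ while the component of $z$ in ${\rm Fix}(Z)$ is trivial, so that neither the component of $z$ nor the global meet $\bigwedge_{g}gu$ produces a second fixed point. Controlling it is what forces the combined use of order rigidity (a homeomorphism fixing $z$ preserves $\preceq$, hence acts rigidly on chains and on the countable set of branch points) and the reduction to $G/Z$; checking that these always combine to yield a genuine $G$-fixed point $\neq z$ is the technical heart of the proof.
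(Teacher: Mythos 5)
Your base case is correct, and it is in fact the paper's entire proof in disguise: the monotone-iteration argument you give for $G=\langle H,t\rangle$ uses only that $t$ \emph{normalizes} $H$ (so that $C={\rm Fix}(H)$ is $t$-invariant), never that it centralizes $H$. Since a finitely generated nilpotent group is polycyclic, one can take a subnormal series $G=G_0\trianglerighteq G_1\trianglerighteq\cdots\trianglerighteq G_{n+1}=\{e\}$ with cyclic quotients and run exactly your base-case step at each stage ($x\mapsto x\wedge g_{i-1}(x)$ is $G_i$-fixed because $g_{i-1}$ normalizes $G_i$ and ${\rm Fix}(G_i)$ is $\wedge$-closed; then iterate $g_{i-1}$ monotonically and pass to the limit). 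That is the paper's argument, and had you set up the induction along such a series rather than on nilpotency class via the center, you would be done.

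As written, however, your inductive step has genuine gaps. First, the claim that an order automorphism of $(Y,\preceq)$ fixing $z$ must fix pointwise a $\preceq$-chain of branch points descending to $z$ is false: a homeomorphism fixing $z$ can shift such a chain ($b_i\mapsto b_{i+1}$), acting like a translation toward $z$ on the initial arc, so those branch points need not be individually fixed. Second, the ``core'' obtained by iterating ${\rm hull}(\overline{E\cup B})$ does not shrink: every point of a dendrite lies on an arc between two of its end points, so ${\rm hull}(\overline{E(Y)})=Y$ already, and the assertion that $Z$ acts trivially on the core is just the assertion that $Z$ acts trivially on $Y$ --- which fails in general, since $Z$ fixes the end and branch points of $Y$ but may act nontrivially on the open edges between them; hence the action does not descend to $G/Z$ and the outer induction is unavailable. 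Third, the global meet $\bigwedge_{g\in G}gu$ can equal $z$ even when the orbit of $u$ stays far from $z$ (the pairwise meets may tend to $z$, as in a comb whose teeth attach at points converging to $z$), so the dichotomy ``either the meet works or the orbit accumulates at $z$'' is not exhaustive. These are precisely the cases you flag as ``the technical heart,'' and they are not resolved; the repair is to abandon the central series in favour of the polycyclic one.
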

\begin{proof}
Take a subnormal series $G=G_0\trianglerighteq G_1\trianglerighteq\cdots \trianglerighteq G_{n}\trianglerighteq G_{n+1}=\{e\}$ such that $G_{i}/G_{i+1}$ is cyclic for each $i$. Take $g_i\in G_{i}\setminus G_{i+1}$ such that $G_{i}/G_{i+1}\cong \langle g_iG_{i+1}\rangle$. Thus $G_{i}=\langle g_i, \cdots, g_n\rangle$ for each $i=0,1,\cdots,n$.

\medskip
By Lemma \ref{fixed point}, there is a point in $X\setminus\{z\}$ which is fixed by $G_n$. Now assume that there is a point $x\in X\setminus\{z\}$ fixed by $G_i$ for some $i\in\{1,\cdots,n\}$. Since $z$ is an end point, there is  $y\in X\setminus\{z\}$ such that $g_{i-1}([z,x])\cap[z,x]=[z,y]$.
Then there is some $w\in [z, y]$ such that either $g_{i-1}(w)=y$ or $g_{i-1}^{-1}(w)=y$.  WLOG, we may assume that $g_{i-1}(w)=y$. Then $[z,w]\subset [z, g_{i-1}(w)]\subset [z, g_{i-1}^{2}(w)]\subset\cdots$.   Let $u=\lim\limits_{k\rightarrow \infty} g_{i-1}^{k}(w)$. Then $u$ is fixed by $g_{i-1}$.

\medskip
We claim that $u$ is fixed by $G_i$ and hence is fixed by $G_{i-1}$. We may assume that $g_{i-1}(x)\neq x$, otherwise $u=y=x$ is fixed by $G_i$. Since $g_{i-1}$ normalizes $G_{i}$, the image of any $G_i$-fixed point  under $g_{i-1}$ is also fixed by $G_i$.  By the definition of $y$, it is the point that  the arc $[z, g_{i-1}(x)]$ branching away from the arc $[z, x]$.  Now that $z, x, g_{i-1}(x)$ are fixed by $G_i$, so is $y$. Further, each $g_{i-1}^{k}(w)$ is fixed by $G_i$ and hence $u$ is fixed by $G_i$.

\medskip
By induction, there is some point in $X\setminus\{z\}$ fixed by $G$.
\end{proof}

The following theorem is due to Duchesne-Monod (\cite{DM2}).

\begin{thm} \label{higher rank dendrite}
Let $\Gamma$ be a higher rank lattice acting on a dendrite $X$. Then either $\Gamma$ has a fixed point or has an invariant  arc.
\end{thm}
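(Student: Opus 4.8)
The plan is to split the argument into a soft geometric reduction and a hard dynamical core. The hard core is to produce a \emph{finite orbit} for the $\Gamma$-action on $X$; once a finite orbit is available, the dichotomy ``fixed point or invariant arc'' follows by an elementary center-of-tree argument. So I would first reduce the theorem to the claim: every higher rank lattice action on a dendrite has a finite orbit.

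Granting a finite orbit $F=\{x_1,\dots,x_k\}$, I would consider its convex hull $T=\bigcup_{i,j}[x_i,x_j]$, the union of the arcs joining the orbit points. Since $F$ is $\Gamma$-invariant and the arc structure is intrinsic to the dendrite, $T$ is a $\Gamma$-invariant subdendrite with only finitely many branch points, i.e. a finite topological tree. A finite tree carries a canonical center, obtained by iteratively pruning its end points: the last nonempty set is either a single point or a central edge, which is an arc. This center is determined by the combinatorial tree structure and is therefore preserved by every homeomorphism of $T$, hence by $\Gamma$. Thus $\Gamma$ either fixes the central point or preserves the central arc, which is exactly the desired dichotomy. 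This step is routine.

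The substance lies in the existence of a finite orbit, where I would invoke the rigidity of higher rank lattices through boundary theory. By the Margulis normal subgroup theorem recalled above, the action is either finite --- in which case it factors through a finite group, and a finite group acting on a dendrite has a fixed point, so we are done --- or almost faithful; assume the latter. Fix an admissible probability measure $\mu$ on $\Gamma$ and, using compactness of $\mathrm{Prob}(X)$, choose a $\mu$-stationary measure $\nu$. By Furstenberg's theory there is a $\Gamma$-equivariant measurable boundary map $\phi\colon B\to \mathrm{Prob}(X)$, where $B$ is the Furstenberg--Poisson boundary of $\Gamma$. The betweenness (median) structure of the dendrite then attaches canonical geometry to each measure $m=\phi(b)$: the set of points $x$ for which every component of $X\setminus\{x\}$ carries $m$-mass at most $1/2$ is a nonempty subdendrite, in fact a point or an arc (the median set of $m$). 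This yields a $\Gamma$-equivariant measurable assignment $b\mapsto M(\phi(b))$, and the remaining goal is to show that this data descends to a genuine finite $\Gamma$-orbit in $X$.

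The main obstacle is ruling out the ``non-elementary'' alternative, in which $X$ carries a unique minimal invariant Cantor set on which $\Gamma$ acts minimally and strongly proximally; in that regime the median construction ceases to be single-valued and no finite orbit is produced directly. This is precisely where the full force of higher rank must enter: a strongly proximal micro-supported boundary action on a Cantor set produces rich commuting subgroups and would force nontrivial (bounded) cohomology, contradicting property (T) and the vanishing results of Burger--Monod for higher rank lattices. Turning this heuristic into a rigorous contradiction --- showing that an almost faithful higher rank lattice action on a dendrite cannot be strongly proximal on a minimal Cantor set --- is the genuinely hard part, and is the heart of the Duchesne--Monod argument. Once that case is excluded, the equivariant median data becomes single-valued with finite image, yielding the required finite orbit and completing the reduction.
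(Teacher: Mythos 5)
The paper does not prove this statement at all: Theorem \ref{higher rank dendrite} is quoted directly from Duchesne--Monod \cite{DM2}, so there is no internal proof to compare yours against. Your soft reduction is correct and is indeed the standard way to pass from a finite orbit to the stated dichotomy: the convex hull of a finite invariant set is a finite topological tree whose vertex set (endpoints together with branch points) is canonical, and its combinatorial center --- a vertex or an edge, obtained by iterated leaf-pruning --- is preserved by every homeomorphism of the tree, hence yields a fixed point or an invariant arc. The observation that a finite action on a dendrite already has a fixed point is likewise fine.

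The genuine gap is that everything of substance is deferred. The existence of a finite orbit (equivalently, the elementarity of the action) \emph{is} the content of the Duchesne--Monod theorem, and your treatment stops exactly where the real work begins: you introduce a stationary measure and a Furstenberg boundary map, correctly describe the median set of a probability measure on a dendrite (a point or an arc), and then assert that the remaining obstacle --- a minimal, strongly proximal, micro-supported action on a Cantor set --- ``would force nontrivial bounded cohomology, contradicting \dots\ Burger--Monod,'' while explicitly labelling this step ``the genuinely hard part'' and attributing it to Duchesne--Monod. That is an accurate description of the architecture of their proof, but it is not a proof: no bounded class is actually constructed from a non-elementary dendrite action, and no precise vanishing theorem is matched against it, so the non-elementary case is never excluded. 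As written, the proposal reduces the theorem to itself via \cite{DM2}. If your intent is simply to cite \cite{DM2}, as the paper does, that is legitimate; but then the boundary-theoretic paragraph should be presented as exposition of the cited argument rather than as part of a proof.
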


\subsection{Ordering spaces} \label{ordering}
Let $\Gamma$  be a countable group and let $\Delta=\{(\gamma,\gamma):\gamma\in \Gamma\}$.  A total order relation $\preceq$ on $\Gamma$ corresponds to a unique point $\varphi\in\{-1, 1\}^{\Gamma\times\Gamma\setminus \Delta}$ satisfying
\begin{itemize}
\item[(R)] (Reflexivity) $\varphi(g,h)=-\varphi(h,g)$;
\item[(T)] (Transitivity) if $\varphi(f,g)=\varphi(g,h)=1$, then $\varphi(f,h)=1$;
\end{itemize}
by setting $\varphi(g,h)=1$ whenever $g\succ h$. Then the set $\mathcal{O}(\Gamma)$ of total orders on $\Gamma$ corresponds to a subset of $\{-1, 1\}^{\Gamma\times\Gamma\setminus \Delta}$. Taking the discrete topology on $\{-1, 1\}$ and endowing $\{-1, 1\}^{\Gamma\times\Gamma\setminus \Delta}$ with the product topology, the subset consisting of $\varphi$ satisfying (R) and (T) is  closed; this leads to a compact metrizable topology on $\mathcal{O}(\Gamma)$.  Furthermore, for $\varphi\in\mathcal{O}(\Gamma)$, if it satisfies additionally that
\begin{itemize}
\item[(L)] (Left-invariance) $\varphi(fg,fh)=\varphi(g,h)$ for any $f,g,h\in \Gamma$ with $g\neq h$,
\end{itemize}
then $\varphi$ is said to be a {\it left-ordering} on $\Gamma$. According to the condition $(L)$, the space $\mathcal{LO}(\Gamma)$ of left-orderings on $\Gamma$ forms a closed subspace of $\mathcal{O}(\Gamma)$. We say $\Gamma$ is {\it left-orderable} if  $\mathcal{LO}(\Gamma)\not=\emptyset$.
\medskip

The following theorem is due to Deroin-Hurtado (\cite{DH}).

\begin{thm} \label{non left order}
No higher rank lattice is left-orderable. (This is equivalent to saying that every orientation-preserving action
on $[0, 1]$ by a higher rank lattice is trivial.)
\end{thm}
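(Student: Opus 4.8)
The plan is to recast left-orderability dynamically and then to contradict the rigidity of higher rank lattices by means of a random walk. I would first record the equivalence behind the parenthetical remark. Left-orderable groups are torsion-free, so a higher rank lattice with torsion is trivially not left-orderable and we may assume $\Gamma$ torsion-free. For such $\Gamma$, a nontrivial orientation-preserving action on $[0,1]$ fixing the endpoints is almost faithful by Margulis' normal subgroup theorem, hence faithful since $\Gamma$ has no torsion; restricting to the open interval $(0,1)\cong\mathbb{R}$ produces a faithful orientation-preserving action on $\mathbb{R}$, and any such action returns a left-ordering by transporting the order of $\mathbb{R}$ along the orbit of a generic point, using faithfulness to separate distinct group elements. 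The converse is the \emph{dynamical realization} of a left-ordering recalled in this section. Hence it suffices to prove that a torsion-free higher rank lattice $\Gamma$ in a simple Lie group $G$ admits no faithful orientation-preserving action $\phi\colon\Gamma\to\mathrm{Homeo}_+(\mathbb{R})$, and I would assume such a $\phi$ exists toward a contradiction.

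Next I would attach a random walk to the action. Fix a symmetric probability measure $\mu$ on $\Gamma$ with finite first moment whose support generates $\Gamma$, and invoke the theory of harmonic (stationary) measures for group actions on the line. This yields a dichotomy. In the first alternative the action preserves a Radon measure on a minimal invariant set; the associated translation-number cocycle then defines a homomorphism $\Gamma\to(\mathbb{R},+)$, which must vanish because a higher rank lattice has Kazhdan's property (T) and hence finite abelianization. Unwinding this vanishing shows the action is trivial on the minimal set, which is incompatible with a nontrivial faithful action. Thus only the second alternative can occur: the harmonic measure is nonatomic and the random products $\phi(g_1\cdots g_n)$ \emph{contract} $\mathbb{R}$, driving the harmonic measure toward a well-defined random limit point.

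The contracting alternative is the heart of the matter, and here I would deploy the full higher rank rigidity of $G$. The contraction assigns to almost every trajectory of the random walk a limit point on the two-point boundary of $\mathbb{R}$; since the Poisson boundary of $(\Gamma,\mu)$ is a quotient of the Furstenberg boundary $B=G/P$, this furnishes a $\Gamma$-equivariant measurable boundary map from $B$ into $\mathrm{Prob}(\partial\mathbb{R})$. Invoking the Nevo--Zimmer intermediate factor theorem together with the Howe--Moore mixing of $G$ and the hypothesis $\mathbb{R}$-rank$\,\geq 2$, such a boundary map is forced to degenerate: it factors through a proper parabolic quotient of $B$ and, through Margulis' normal subgroup theorem, compels $\phi(\Gamma)$ to factor through a finite quotient of $\Gamma$. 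But a finite group acting on $\mathbb{R}$ by orientation-preserving homeomorphisms is trivial, so $\phi$ cannot be faithful---the desired contradiction.

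The main obstacle is precisely this last passage: manufacturing a genuine equivariant boundary map out of the soft contraction data and then extracting rigidity from it. The analytic half---constructing the harmonic measure, verifying its nonatomicity, and establishing contraction with a well-defined limit point---needs the moment hypotheses on $\mu$ and careful control of the noncompact line, which, unlike the circle, carries no canonical finite invariant structure. The conceptual half is where the hypothesis $\mathbb{R}$-rank$\,\geq 2$ is indispensable: rank-one lattices such as surface groups and free groups are left-orderable, so any correct argument must fail for them, and it is exactly the higher rank boundary rigidity (Nevo--Zimmer and Margulis) that collapses in rank one. I would therefore expect essentially all of the genuine difficulty to live in the harmonic-measure construction and the boundary-rigidity analysis of the contracting case.
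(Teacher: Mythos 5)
The paper does not prove this statement at all: it is imported verbatim as a theorem of Deroin--Hurtado \cite{DH} and is one of the deep external inputs to the whole article, so the relevant comparison is with their proof rather than with anything in this text. Your preliminary reductions are fine (left-orderable groups are torsion-free; for a torsion-free lattice Margulis' normal subgroup theorem upgrades ``nontrivial'' to ``faithful''; faithful orientation-preserving actions on $\mathbb{R}$ correspond to left-orderings as in Proposition \ref{dyn real}). But each branch of your dichotomy contains a genuine gap. In the invariant-Radon-measure case, the vanishing of the translation-number homomorphism (via property (T) and finite abelianization) only forces $\Gamma$ to fix every point of the support of the measure; the action can still be faithful and nontrivial inside the complementary gaps, so ``trivial on the minimal set'' does not contradict faithfulness. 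Closing this branch requires either minimality of the dynamical realization or a separate treatment of the gap actions, and the naive induction on gaps has no reason to terminate.

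The contracting case is where the argument genuinely fails. The two ends of $\mathbb{R}$ are global fixed points of any orientation-preserving action, so a $\Gamma$-equivariant map from the Furstenberg boundary to $\mathrm{Prob}(\partial\mathbb{R})$ exists trivially (take any constant map) and carries no information; the actual limit object produced by contraction is a random point of the line itself, and the line is a noncompact $\Gamma$-space with no ambient $G$-structure, so neither the Nevo--Zimmer intermediate factor theorem nor Howe--Moore mixing applies as stated --- there is no $G$-space of which your limit data is a factor, and ``factors through a proper parabolic quotient of $B$'' is not meaningful without an induction/suspension step you have not supplied. This is exactly the point where earlier attacks on the one-dimensional Zimmer conjecture stalled, and where Deroin--Hurtado's actual argument introduces its new machinery (the compact space of harmonic actions on the line, a delicate analysis of commuting unipotent configurations coming from the root system, and the use of property (T) on that deformation space), none of which is reproduced by the sentence ``rigidity forces the map to degenerate.'' As written, the proposal is a plausible research outline rather than a proof, and the hard content of the theorem lives precisely in the steps left open.
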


\subsection{Dynamical realizations} \label{dynamical realization}
 The following proposition is a dynamical characterization of left-orderability (see e.g. \cite{Gh01, Na1}).
\begin{prop} \label{dyn real}
Let $\Gamma$ be a countable group. Then $\Gamma$ is left-orderable if and only if it admits a faithful action
on the real line $\mathbb R$ by orientation-preserving homeomorphisms.
\end{prop}
The proof of the necessity part of Proposition \ref{dyn real} uses the dynamical realization technique, which will be used later. So,
we outline the construction process here.

\medskip
Suppose $\Gamma$ is a left-orderable group with a left-ordering $\preceq$.
We enumerate $\Gamma$ as $\{g_i: i=1, 2, 3, ...\}$. Define a map $t:\Gamma\rightarrow\mathbb R$ by the induction process:
let $t(g_1)=0$ and suppose $t(g_1),...,t(g_n)$ have been defined; if $g_{n+1}$ is greater than (resp. smaller than)
$t(g_1),...,t(g_n)$, then let $t(g_{n+1})=\max\{t(g_1),...,t(g_n)\}+1$ (resp. $\min\{t(g_1),...,t(g_n)\}-1$);
if $g_{n+1}$ lies between $g_i, g_j$ and  $g_i, g_j$ are adjacent for some $i\not=j\in\{1,...,n\}$, then let
$t(g_{n+1})=(t(g_i)+t(g_j))/2$. For $g\in \Gamma$, define the action of $g$ on $t(\Gamma)$ by letting $gt(g')=t(gg')$
for each $g'\in G$; then extend this action to the closure $\overline {t(\Gamma)}$ and extend further to the whole line
by mapping affinely on the maximal intervals in $\mathbb R\setminus \overline {t(\Gamma)}$. Thus we obtain an orientation-preserving faithful action
of $\Gamma$ on $\mathbb R$. This construction process is called the {\it dynamical realization}.

\section{Local conditions for left-orderability}
\medskip
Let $\Gamma$ be a countable group and $B$ be a nonempty subset of $\Gamma$.  If $\varphi\in\{-1, 1\}^{\Gamma\times\Gamma\setminus \Delta}$ satisfying
\begin{itemize}
\item[(R$_B$)]   $\varphi(g,h)=-\varphi(h,g)$, for any $g,h\in B$ with $g\neq h$;
\item[(T$_B$)]  if $\varphi(f,g)=\varphi(g,h)=1$, then $\varphi(f,h)=1$, for any $f, g,h\in B$ with $f\neq g, f\neq h, g\neq h$;
\end{itemize}
then $\varphi$ defines a total ordering $\preceq_{\varphi}$ on $B$ by setting $g\succ_{\varphi} h$ if and only if $\varphi(g,h)=1$ for any $g\not=h\in B$. Then the set
\[ \mathcal{O}(\Gamma; B):=\left\{ \varphi\in\{-1, 1\}^{\Gamma\times\Gamma\setminus \Delta}: \varphi \text{ satisfies } R_{B}, T_B\right\}  \]
is closed in $\{-1, 1\}^{\Gamma\times\Gamma\setminus \Delta}$ with respect to the topology given in Section \ref{ordering}. If $(B_n)_{n=1}^{\infty}$ is an increasing sequence of subsets of $\Gamma$ with $\Gamma=\bigcup_{n=1}^{\infty} B_n$, then $ \bigcap_{n=1}^{\infty}\mathcal{O}(\Gamma; B_n)=\mathcal{O}(\Gamma)$.

\medskip
Now, let $F, B, B'$ be nonempty subsets  of $\Gamma$ with $B\subset B'$ and $FB\subset B'$, where $FB=\{fb: f\in F, b\in B\}$.  For $\varphi\in \mathcal{O}(\Gamma; B')$, we say $\varphi$ is $(F,B)$-{\it invariant}, if it satisfies
\begin{itemize}
\item[(L$_{F,B}$)]   $\varphi(fg,fh)=\varphi(g,h)$ for any $f\in F$ and $g\not=h\in B$.
\end{itemize}
Then the set
\[\mathcal{L}_{F}\mathcal{O}(\Gamma; B,B'):=\left\{ \varphi\in\{-1, 1\}^{\Gamma\times\Gamma\setminus \Delta}: \varphi \text{ satisfies } R_{B'}, T_{B'}, L_{F,B}\right\}  \]
is also closed in $\{-1, 1\}^{\Gamma\times\Gamma\setminus \Delta}$.

\medskip
Let $\mathcal{L}_{F}\mathcal{O}(\Gamma)$ denote the set of total orderings on $\Gamma$ that are invariant under the left translations of $F$, i.e.
\[ \mathcal{L}_{F}\mathcal{O}(\Gamma)=\left\{ \varphi\in\{-1, 1\}^{\Gamma\times\Gamma\setminus \Delta}: \varphi \text{ satisfies } R, T, L_{F,\Gamma}\right\}, \]
where the conditions R and T are as in Section 2.3.
It is clear that $\mathcal{L}_{F}\mathcal{O}(\Gamma)$ is also closed in $\{-1, 1\}^{\Gamma\times\Gamma\setminus \Delta}$ and
\[ \mathcal{LO}(\Gamma)=\bigcap\{ \mathcal{L}_{F}\mathcal{O}(\Gamma): F \text{ is a finite subset of }\Gamma\}.\]

If $(B_n)_{n=1}^{\infty}$ is a sequence of subsets of $\Gamma$ satisfying $FB_n\cup B_n\subset B_{n+1}$ for each $n\geq 1$ and $\Gamma=\bigcup_{n=1}^{\infty}B_n$, then
\[\mathcal{L}_{F}\mathcal{O}(\Gamma)=\bigcap_{n=1}^{\infty} \mathcal{L}_{F}\mathcal{O}(\Gamma; B_n,B_{n+1}).\]

\medskip
According to the above discussion, we have
\begin{lem}\label{finitely orderable}
Let $\Gamma$ be a countable group. Suppose $F$ is a nonempty finite subset of $\Gamma$ and $(B_n)_{n=1}^{\infty}$ is a sequence of subsets of $\Gamma$ satisfying $FB_n\cup B_n\subset B_{n+1}$ for each $n\geq 1$ and $\Gamma=\bigcup_{n=1}^{\infty}B_n$. If for each $n\geq1$, there is a total ordering $\preceq_{n}$ on $B_{n+1}$ that is $(F,B_n)$-invariant, i.e. $fg\preceq_n fh$ whenever $g\preceq_n h$ for any $f\in F$ and $g,h\in B_n$, then there is an $F$-invariant total ordering on $\Gamma$. Further, if, for each finite subset $E$ of $\Gamma$, there is an $E$-invariant total ordering on $\Gamma$, then $\Gamma$ is left-orderable.
\end{lem}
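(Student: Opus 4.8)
The plan is to run a compactness argument inside the compact product space $\{-1,1\}^{\Gamma\times\Gamma\setminus\Delta}$, exploiting that all of the sets $\mathcal{O}(\Gamma; B)$, $\mathcal{L}_{F}\mathcal{O}(\Gamma; B,B')$, $\mathcal{L}_{F}\mathcal{O}(\Gamma)$ and $\mathcal{LO}(\Gamma)$ introduced above are closed, together with the identity $\mathcal{L}_{F}\mathcal{O}(\Gamma)=\bigcap_{n=1}^{\infty}\mathcal{L}_{F}\mathcal{O}(\Gamma; B_n,B_{n+1})$ already recorded. Nothing beyond these structural facts and the finite intersection property is needed.

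For the first assertion I would first turn each $\preceq_{n}$ into a point of the product space. Since $\preceq_{n}$ is a total ordering of $B_{n+1}$ it determines the coordinates $\varphi_{n}(g,h)$ for $g\neq h\in B_{n+1}$, and I extend $\varphi_{n}$ to all of $\{-1,1\}^{\Gamma\times\Gamma\setminus\Delta}$ by assigning arbitrary values to the remaining coordinates; the hypotheses then give $\varphi_{n}\in\mathcal{L}_{F}\mathcal{O}(\Gamma; B_n,B_{n+1})$. The crucial bookkeeping observation is monotonicity: from $FB_{n}\cup B_{n}\subset B_{n+1}$ the sequence $(B_n)$ is increasing, and the conditions $R_{B'},T_{B'},L_{F,B}$ only weaken as the index sets shrink. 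Concretely, fixing $m$ and taking any $n\geq m$, the four points $g,h,fg,fh$ appearing in $L_{F,B_m}$ all lie in $B_{n+1}$ (using $FB_{m}\subset B_{m+1}\subset B_{n+1}$), where $\varphi_{n}$ agrees with the genuine order $\preceq_{n}$; hence $\varphi_{n}$ satisfies $R_{B_{m+1}},T_{B_{m+1}},L_{F,B_m}$, i.e. $\varphi_{n}\in\mathcal{L}_{F}\mathcal{O}(\Gamma; B_m,B_{m+1})$ for every $n\geq m$.

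Next I would pass to a convergent subsequence $\varphi_{n_k}\to\varphi$, which exists by compactness. Fixing $m$, all but finitely many $\varphi_{n_k}$ lie in the closed set $\mathcal{L}_{F}\mathcal{O}(\Gamma; B_m,B_{m+1})$, so the limit $\varphi$ does as well. As $m$ was arbitrary, $\varphi\in\bigcap_{m}\mathcal{L}_{F}\mathcal{O}(\Gamma; B_m,B_{m+1})=\mathcal{L}_{F}\mathcal{O}(\Gamma)$, which is precisely an $F$-invariant total ordering of $\Gamma$. For the second assertion I would invoke the finite intersection property together with $\mathcal{LO}(\Gamma)=\bigcap\{\mathcal{L}_{F}\mathcal{O}(\Gamma): F\text{ finite}\}$: given finitely many finite subsets $F_1,\dots,F_k$, set $E=F_1\cup\cdots\cup F_k$; since $F_i\subset E$, every $E$-invariant ordering is $F_i$-invariant, so $\mathcal{L}_{E}\mathcal{O}(\Gamma)\subset\bigcap_{i=1}^{k}\mathcal{L}_{F_i}\mathcal{O}(\Gamma)$. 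By hypothesis $\mathcal{L}_{E}\mathcal{O}(\Gamma)\neq\emptyset$, so every finite subfamily of $\{\mathcal{L}_{F}\mathcal{O}(\Gamma)\}$ has nonempty intersection, and compactness forces $\mathcal{LO}(\Gamma)\neq\emptyset$.

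I expect the only genuinely delicate point to be the bookkeeping in the second paragraph: verifying that for $n\geq m$ the extended $\varphi_{n}$ really satisfies $L_{F,B_m}$, which hinges on checking that the displaced pairs $(fg,fh)$ remain inside the region $B_{n+1}\times B_{n+1}$ on which $\varphi_{n}$ coincides with $\preceq_{n}$. Everything else is a routine appeal to compactness, closedness, and the finite intersection property.
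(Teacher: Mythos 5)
Your proof is correct and takes essentially the same route as the paper, which deduces the lemma directly from the preceding discussion: the closedness of the sets $\mathcal{L}_{F}\mathcal{O}(\Gamma; B_n,B_{n+1})$ and $\mathcal{L}_{F}\mathcal{O}(\Gamma)$ in the compact space $\{-1,1\}^{\Gamma\times\Gamma\setminus\Delta}$, the identities $\mathcal{L}_{F}\mathcal{O}(\Gamma)=\bigcap_{n}\mathcal{L}_{F}\mathcal{O}(\Gamma;B_n,B_{n+1})$ and $\mathcal{LO}(\Gamma)=\bigcap_{F}\mathcal{L}_{F}\mathcal{O}(\Gamma)$, and compactness. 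You have simply made explicit the subsequence extraction and finite-intersection-property bookkeeping that the paper leaves implicit, and your verification that $\varphi_n$ satisfies $L_{F,B_m}$ for $n\geq m$ is the right check.
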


\section{Left-orderability and group actions on dendrites}

In this section, we will give an equivalent characterization of left-orderability for a finitely generated group through
its actions on dendrites. Then we complete the proof of the first main theorem by using this characterization.

\begin{prop} \label{char dendrite action}
Let $\Gamma$ be a finitely generated group. Then $\Gamma$ is left-orderable if and only if it admits an
almost free action on a nondegenerate dendrite with an end point fixed.
\end{prop}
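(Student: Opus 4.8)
The statement is an equivalence, so I will treat the two implications separately; the forward one is a soft consequence of the dynamical realization, while the converse is the substantial part.

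For the direction \emph{left-orderable} $\Rightarrow$ \emph{almost free dendrite action with a fixed end point}, the plan is to begin with Proposition \ref{dyn real}, which turns a left-ordering into a faithful orientation-preserving action of $\Gamma$ on $\mathbb R$, and then compactify. First I would use the explicit dynamical realization recalled after Proposition \ref{dyn real}: since $t(\Gamma)$ is a single orbit on which no nonidentity element has a fixed point, no $g\neq\mathrm{id}$ fixes a nondegenerate interval (such an interval must avoid $t(\Gamma)$ and hence lie in a complementary gap, on which $g$ acts affinely; an affine map fixing a subinterval is the identity there, and this is incompatible with faithfulness on the orbit $t(\Gamma)$). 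Thus for every $g\neq\mathrm{id}$ the closed set $\mathrm{Fix}(g)$ has empty interior in $\mathbb R$, so all its connected components are singletons. I would then pass to the two-point compactification $\mathbb R\cup\{\pm\infty\}\cong[0,1]$, which is an arc and hence a nondegenerate dendrite; every $g$ extends to a homeomorphism fixing the two endpoints, which are end points of the arc. The fixed-point sets only pick up these two endpoints, so they stay totally disconnected, and we obtain an almost free action with a fixed end point.

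For the converse, suppose $\Gamma$ acts almost freely on a nondegenerate dendrite $X$ with a fixed end point $z$. By Lemma \ref{finitely orderable} it suffices, for each finite $F\subseteq\Gamma$, to produce an $F$-invariant total ordering on $\Gamma$ (through an exhaustion $B_n$ with $FB_n\cup B_n\subseteq B_{n+1}$), after which letting $F$ range over all finite subsets gives left-orderability. The first step is to replace $\Gamma$ by a concrete orbit. Each $\mathrm{Fix}(g)$ with $g\neq e$ is totally disconnected, hence contains no arc and is closed with empty interior, hence nowhere dense; as $X$ is a compact metric, hence Baire, space and $\Gamma$ is countable, $X\setminus\bigcup_{g\neq e}\mathrm{Fix}(g)$ is nonempty, so there is a point $p$ with trivial stabilizer, and $g\mapsto gp$ identifies $\Gamma$ with the orbit $O=\Gamma p$. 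Next I would exploit $z$: the arcs $[z,gp]$ are permuted by $\Gamma$ (since $gz=z$), so $g\preceq_z h\iff gp\in[z,hp]$ is a $\Gamma$-invariant partial order whose initial segments $\{g:g\preceq_z h\}$ are chains. The task is then to refine this tree order, finite piece by finite piece, into an $(F,B_n)$-invariant total order and feed the output into Lemma \ref{finitely orderable}.

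The main obstacle is precisely this linearization, and it is where both hypotheses are essential. A warning is that one cannot hope to order the branches at each branching vertex $\Gamma$-equivariantly: an element may genuinely permute such branches, and a nontrivial permutation of finitely many branches preserves no linear order on them. The resolution uses the crucial difference between a dendrite and a simplicial tree. In a tree an element fixing the end $z$ and another point would fix the whole ray, contradicting almost freeness; in a dendrite, by contrast, an element $g$ fixing $z$ and a point $o$ fixes the arc $[z,o]$ only \emph{setwise}, acting on it by an orientation-preserving homeomorphism, and by almost freeness (no nonidentity element fixes an arc, which in particular excludes torsion via Lemma \ref{fixed point} and Proposition \ref{fixed arcs}) this action is nontrivial. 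Hence the ``direction toward $z$'' carries a faithful one-dimensional orientation-preserving dynamics, and the order is to be read from the germ of the configuration $\{gp\}$ as seen from $z$, not from the combinatorics of branches. Concretely, for fixed finite $F$ the plan is to use this dynamics in the $z$-direction to compare the finitely many relevant elements and to verify $(F,B_n)$-invariance directly; the compactness built into Lemma \ref{finitely orderable} then assembles these finite orders. Showing that this comparison is well defined and $F$-invariant — equivalently, that the one-dimensional dynamics seen from $z$ is not destroyed by fixed points accumulating at $z$ — is the technical heart of the argument.
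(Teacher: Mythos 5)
Your overall architecture matches the paper's (dynamical realization plus two\-/point compactification for the forward direction; Lemma \ref{finitely orderable} for the converse), but both halves have genuine gaps. In the forward direction, your final step does not close. You correctly reduce to the case where a nondegenerate component $J$ of ${\rm Fix}(g)$ lies in the closure $[a,b]$ of a complementary gap of $\overline{t(\Gamma)}$ and conclude that $g$ is the identity on $[a,b]$; but ``incompatible with faithfulness'' does not follow from this. An element can perfectly well be the identity on one gap and act nontrivially elsewhere, so you get a contradiction only if you can show that $a$ or $b$ lies in $t(\Gamma)$ itself (where $g$ acts freely). Since $g$ fixes $a$ and $b$, in fact $a,b\notin t(\Gamma)$, so both are accumulation points of $t(\Gamma)$, and ruling this out requires the specific midpoint\-/insertion rule of the realization: the paper picks $g_{m'},g_{n'}$ with $t$-values just below $a$ and just above $b$ and shows the construction would have placed some $t(g_i)=(t(g_{m'})+t(g_{n'}))/2$ inside $(a,b)$, contradicting $(a,b)\cap\overline{t(\Gamma)}=\emptyset$. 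Without this (or an equivalent) argument, the almost freeness of the compactified action is not established.

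In the converse direction you have the right frame and the right philosophy, but no proof: you set up the orbit identification, a $\Gamma$-invariant tree partial order $\preceq_z$, correctly observe that equivariantly linearizing the branches is obstructed, and then declare that reading the order ``from the germ at $z$'' resolves it --- which is exactly the step you leave undone and label the technical heart. The paper's resolution is concrete and different from your orbit-based route. Take generators $g_1,\dots,g_k$; by Lemma \ref{fixed point} each $g_i$ fixes some $t_i\neq z$, and since $z$ is an end point the intersection $\bigcap_i[z,t_i]=[z,t]$ is a nondegenerate arc. For a finite set $B_{n+1}$ one chooses $s\in(z,t)$ with $g([z,s])\subset[z,t)$ for every $g\in B_{n+1}$, picks a dense sequence $(x_j)$ in $(z,s)$, and declares $g\prec h$ according to the first index $j$ with $g(x_j)\neq h(x_j)$ (which exists precisely because ${\rm Fix}(h^{-1}g)$ is totally disconnected and hence cannot contain $[z,s]$). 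Invariance under $f\in F$ holds because $f$ fixes $z$ and maps arcs $[z,w]$ to arcs $[z,f(w)]$ order\-/preservingly, so the comparison at the first disagreeing $x_j$ is preserved. Note that this construction never needs a free point $p$ and never orders branches at a vertex; it compares images of points of a single small subarc at $z$. As written, your proposal identifies the ingredients but does not supply the order, so the converse implication is not proved.
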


\begin{proof}
$(\Longrightarrow)$ Let $\preceq$ be a left-ordering on $\Gamma$ and numerate $\Gamma$ as $\{g_1, g_2, g_3, \cdots\}$.
Then by the dynamical realization as in Section \ref{dynamical realization}, we get an orientation-preserving faithful action of $\Gamma$ on
$\mathbb R$.  Extending this action to the two points compactification $I:=\{-\infty\}\cup\mathbb R\cup \{+\infty\}$
by letting $-\infty$ and $+\infty$ fixed by each $g\in\Gamma$, we get an action of $\Gamma$ on the arc $I$.

We claim
that this extended action on $I$ is almost free. Otherwise, there is some $g\not=e\in \Gamma$ with the fixed point set
${\rm Fix(g)}$ of $g$  not totally disconnected; so, it contains a nondegenerate arc $J$. By the definition of
dynamical realization, we see that there is some maximal open interval $(a, b)$ of $\mathbb R\setminus \overline {t(\Gamma)}$
with $J\subset [a, b]$, where $t$ is as in Section 2.4. Since $g$ fixes $a$ and $b$, $\{a, b\}\cap t(\Gamma)=\emptyset$.
Thus $a$ and $b$ are accumulation points of $t(\Gamma)$. Take $g', g''\in \Gamma$ with $0<a-t(g')<(b-a)/3$ and $0<t(g'')-b<(b-a)/3$.
Suppose $g'=g_m$ and $g''=g_n$ for some indices $m, n$. Let $k=\max\{n, m\}$ and let $n', m'\in\{1, 2,\cdots, k\}$ be such that $t(g_{m'})$ is maximal in $\{t(g_1), t(g_2),\cdots, t(g_k)\}\cap(-\infty, a)$
and $t(g_{n'})$ is minimal in $\{t(g_1), t(g_2),\cdots, t(g_k)\}\cap(b, +\infty)$ respectively.
Let $i$ be the first index so that $i>\max\{m', n'\}$ and $g_{m'}\prec g_i\prec g_{n'}$.
Then $t(g_i)=(t(g_{m'})+t(g_{n'}))/2\in (a, b)$, which is a contradiction.

Thus the action of $\Gamma$ on the arc
$I$ is almost free and $I$ is also a nondegenerate dendrite with an end point fixed by each $g\in\Gamma$.

\medskip

$(\Longleftarrow)$ Let $z\in X$ be an end point of $X$ fixed by $\Gamma$. Let $\{g_1^{\pm}, \cdots,g_{k}^{\pm}\}$ be a set of generators for $\Gamma$.
By Lemma \ref{fixed point}, for each $i\in\{1,\cdots,k\}$, there is a point $t_i\in X\setminus\{z\}$ fixed by $g_i$. Let $T$ be the smallest subcontinuum of $X$ containing $\{z,t_1,\cdots,t_n\}$, which is a subtree of $X$. Let $I$ denote the intersection of all arcs $[z, t_i]$, $i=1,\cdots, k$.  Since $z$ is an end point,
the arc $I$ is not reduced to a point. We write $I=[z, t]$ and give a canonical ordering $<$ on $[z,t]$ with $t>z$.

Fix a finite subset $F$ of $\Gamma$.  Choose a sequence $(B_n)_{n=1}^{\infty}$ of finite subsets of $\Gamma$ satisfying $FB_n\cup B_n\subset B_{n+1}$ for each $n\geq 1$ and $\Gamma=\cup_{n=1}^{\infty}B_n$. It is clear that such sequence exists.

 Given $n\geq 1$, there is a point $s\in (z,t)$ such that $g([z,s])\subset [z,t)$ for each $g\in B_{n+1}$. Now choose a dense sequence $(x_i)_{i=1}^{\infty}$ in $(z, s)$.  For each pair of two distinct $g,h\in B_{n+1}$, define $g\prec h$ if the smallest $j\geq 1$ for which $g(x_j)\neq h(x_j)$ is such that $g(x_j)<h(x_j)$ with respect to the canonical ordering $<$ on $[z, t]$. Indeed, such $j$ exists by the almost freeness of the $\Gamma$-action.  It is easy to verify that $\prec$ is a total ordering defined on $B_{n+1}$. Note that for each $g\in B_{n+1}$, the restriction of $f$ to $(z, s)$ is increasing with respect to $<$ on $[z,t]$. Thus for every $f\in F$ and $g,h\in B_n$, we have $fg\prec fh$ whenever $g\prec h$. Hence the ordering $\preceq$ on $B_{n+1}$ is $(F,B_n)$-invariant.

According to Lemma \ref{finitely orderable},  we conclude that $G$ is left-orderable.

\end{proof}

Now, we are ready to prove the first main theorem of the paper.

\begin{proof}[Proof of Theorem \ref{main theorem 1}]
Assume to the contrary that the action $(X, \Gamma)$ is almost free.
According to Theorem \ref{higher rank dendrite}, $\Gamma$ either has a  fixed point or preserves an arc.
\medskip

We discuss into two cases:

\medskip
{\bf Case 1.} $\Gamma$ preserves a nondegenerate arc $I$.  Then there is a subgroup $\Gamma'$ of $\Gamma$ with index  at most two such that the restriction of $\Gamma'$ to $I$ preserves the orientation of $I$. Since $\Gamma'$ is also a higher rank lattice, its action on $I$ is trivial by Theorem \ref{non left order}.
This contradicts the assumption of almost freeness of the $\Gamma$ action.

\medskip
{\bf Case 2.}  $\Gamma$ has a fixed point $z\in X$.  Since the order of $z$ is finite, there are finitely many connected components of $X\setminus \{z\}$. Fix a connected component $C$ of $X\setminus \{z\}$. Let $\Gamma''$ be the subgroup of $\Gamma$ that preserves $C$, i.e.
\[\Gamma''=\{\gamma\in\Gamma: \gamma(C)=C\}.\]
Then $\Gamma''$ has finite index in $\Gamma$ and hence is also a higher rank lattice. Consider the restriction action of $\Gamma''$ to $Y=\overline{C}=C\cup\{z\}$.
Noting that $z$ is an end point of $Y$ fixed by $\Gamma''$ and $\Gamma''$ is finitely generated, $\Gamma''$ is left-orderable by Proposition \ref{char dendrite action}.
This contradicts Theorem \ref{non left order}.
\end{proof}

\section{Almost finiteness for actions by finite index subgroups of $SL_n(\mathbb Z)$ }

Let $G$ be a countable group. A {\it left quasi-order} $\preceq$ on $G$ is a binary relation on $G$ satisfying
\begin{itemize}
\item[(1)] for any $g,h\in G$, either $g\preceq h$ or $h\preceq g$;
\item[(2)] for any $f,g,h\in G$, if $f\preceq g$ and $g\preceq h$, then $f\preceq h$;
\item[(3)] for any $f,g,h\in G$, if $g\preceq h$, then $fg\preceq fh$.
\end{itemize}
Let $\preceq $ be a left quasi-order on $G$ and $g,h\in G$. We say $g\ll h$ if either $g^{k}\preceq h$ for all $k\in\mathbb{Z}$ or  $g^{k}\preceq h^{-1}$ for all $k\in\mathbb{Z}$. We write $g\succ h$ if $g\not\preceq h$.
By (1), there is no element $g\in G$ satisfying $g\succ g$.

The following lemma is similar to Lemma 3.2 in \cite{Wi2}.
\begin{lem}\label{ll}
Let $\preceq$ be a left quasi-order on a group $G$. If $a,b,c\in G$ satisfies $[a,b]=a^{-1}b^{-1}ab=c^{r}$ for some $r\in \mathbb{Z}\setminus\{0\}$ and $c$ commutes with both $a$ and $b$, then either $c\ll a$ or $c\ll b$.
\end{lem}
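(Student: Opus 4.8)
The plan is to prove the contrapositive: assuming $c\not\ll a$ and $c\not\ll b$, I will derive a contradiction. The one structural feature I lean on throughout is that $c$ is central in $\langle a,b,c\rangle$, which upgrades the single-step identity $ab=ba\,c^{r}$ (read off from $[a,b]=c^{r}$) into the quadratic identity $a^{m}b^{n}=b^{n}a^{m}c^{rmn}$ for all $m,n$, by a routine induction. The whole contradiction will pit this quadratic growth of the commutator against merely linear control of the individual powers of $a$ and $b$. Before starting I would record the elementary facts about $\preceq$ that survive the loss of antisymmetry: it is reflexive and total (so every element is comparable to $e$), strict transitivity $x\prec y\prec z\Rightarrow x\prec z$ holds, and left multiplication preserves $\prec$.

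First I would normalize the sign of $c$. One checks directly that $c\ll a\iff c^{-1}\ll a$ and $c\ll a\iff c\ll a^{-1}$, since $\{c^{k}:k\in\mathbb{Z}\}$ is closed under inversion. If $c\sim e$ (that is $c\preceq e\preceq c$), then left-invariance forces $c^{k}\sim e$ for all $k$, whence $c\ll a$ automatically and we are done; so, replacing $(c,r)$ by $(c^{-1},-r)$ if necessary, I may assume $c\succ e$. Then left-invariance gives the strict chain $\cdots\prec c^{-1}\prec e\prec c\prec c^{2}\prec\cdots$, so comparing powers of $c$ reduces to comparing exponents. Next I unpack the hypotheses: $c\not\ll a$ means that neither $c^{k}\preceq a$ for all $k$ nor $c^{k}\preceq a^{-1}$ for all $k$, and because the powers of $c$ increase without bound this produces positive exponents $k_{1},k_{1}'$ with $a\prec c^{k_{1}}$ and $a^{-1}\prec c^{k_{1}'}$, and likewise $b\prec c^{k_{2}}$, $b^{-1}\prec c^{k_{2}'}$ from $c\not\ll b$.

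The crucial amplification step uses centrality: from $a\prec c^{k_{1}}$, left-multiplying by $a$, rewriting $a\,c^{mk_{1}}=c^{mk_{1}}a$, and then left-multiplying $a\prec c^{k_{1}}$ by $c^{mk_{1}}$, strict transitivity gives $a^{m}\prec c^{mk_{1}}$ for all $m\geq 1$ by induction; the same device bounds negative powers and powers of $b$. Writing this uniformly, there are constants $\kappa_{1},\kappa_{2}>0$ with $c^{-|m|\kappa_{1}}\prec a^{m}\prec c^{|m|\kappa_{1}}$ and $c^{-|n|\kappa_{2}}\prec b^{n}\prec c^{|n|\kappa_{2}}$ for all nonzero $m,n$. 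Chaining these (again sliding powers of $c$ through by centrality) yields $a^{m}b^{n}\prec c^{M}$ and $b^{n}a^{m}\succ c^{-M}$ with $M:=|m|\kappa_{1}+|n|\kappa_{2}$ linear. Substituting $a^{m}b^{n}=b^{n}a^{m}c^{rmn}$ into the upper bound and cancelling by left-multiplying with $c^{-rmn}$ gives $b^{n}a^{m}\prec c^{M-rmn}$, so $c^{-M}\prec b^{n}a^{m}\prec c^{M-rmn}$ and comparing exponents forces $rmn<2M$. Finally I choose the signs of $m,n$ so that $rmn>0$ (possible since $r\neq 0$) and set $|m|=|n|=N\to\infty$: the left side grows like $|r|N^{2}$ while $2M$ grows like $N$, the desired contradiction, so $c\ll a$ or $c\ll b$.

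The step I expect to be the main obstacle, and the real heart of the matter, is exactly the passage from linear bounds to the forbidden quadratic comparison, because a left quasi-order is neither right-invariant nor antisymmetric, so one may not freely invert inequalities or multiply them on the right. Centrality of $c$ is precisely what repairs this: it makes left- and right-multiplication by powers of $c$ coincide, allowing the single inequalities $a\prec c^{k_{1}}$ and $b\prec c^{k_{2}}$ to be amplified into linear control of all powers and products, whereas the commutator's intrinsic quadratic term $c^{rmn}$ cannot be absorbed into a linear bound. I would therefore be especially careful to justify each manipulation using only totality, strict transitivity, and left-invariance, never antisymmetry or right-invariance.
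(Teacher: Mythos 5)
Your proof is correct and rests on the same mechanism as the paper's: centrality of $c$ upgrades $[a,b]=c^{r}$ to the quadratic identity $a^{m}b^{n}=b^{n}a^{m}c^{rmn}$, which is then played off against the linear bounds on powers of $a$ and $b$ extracted from $c\not\ll a$ and $c\not\ll b$. The paper merely packages this more tersely, normalizing $a,b,c\succeq e$ and $r>0$ and exhibiting the single telescoping product $(b^{-1}c^{q})^{m}(a^{-1}c^{p})^{m}b^{m}a^{m}=c^{-m^{2}r+m(p+q)}$, which is simultaneously $\succ e$ and $\preceq e$ for large $m$; the difference from your exponent-comparison bookkeeping is purely cosmetic.
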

\begin{proof}
By the definition of the relation $\ll$, for any $g,h\in G$, $ g\ll h$ is equivalent to $g^{\pm 1}\ll h^{\pm 1}$. Thus we may assume that $e:=e_{G}\preceq a,b,c$ and $r>0$;  
and assume to the contrary that $c\not\ll a$ and $c\not\ll b$. Thus there are some $p,q\in\mathbb{Z}_{+}$ such that $c^{p}\succ a$ and $c^{q}\succ b$. According to the left invariance, we have
\[e\prec a^{-1}c^{p},\ \  e\prec b^{-1}c^{q}.\ \   \]
Noting that  $e\preceq a,\ \  e\preceq b,\ \  e\preceq c$, we have for sufficiently large positive integer $m$: 
\begin{eqnarray*} 
e&\prec& (b^{-1}c^{q})^{m}(a^{-1}c^{p})^{m} a^{m}b^{m}\\
&=& [b^{m}, a^{m}] c^{m(p+q)}\\
&=& c^{-m^2r+m(p+q)}\\
&\preceq& e.
\end{eqnarray*}
This is a contradiction.
\end{proof}

\begin{lem}\label{key lem n=3}
Suppose that $\Gamma$ is a finite index subgroup of $SL_{3}(\mathbb{Z})$. If $\Gamma$ acts on a nondegenerate dendrite $X$ and fixes an end point $z$, then there is a point $s\in X\setminus\{z\}$ such that the arc $[z, s]$ is fixed by $\Gamma$ pointwise.
\end{lem}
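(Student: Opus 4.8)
```latex
The plan is to exploit the specific group structure of finite index subgroups of
$SL_3(\mathbb Z)$, which contain an abundance of commuting pairs with prescribed
commutators, together with the left quasi-order machinery of Lemma \ref{ll}. First I
would build a left quasi-order on $\Gamma$ from the action on $X$ near the fixed end
point $z$. The natural candidate mimics the order used in the proof of
Proposition \ref{char dendrite action}: using Lemma \ref{fixed point} and the fact that
$z$ is an end point, one finds an initial arc $[z,t]$ on which the generators behave
controllably, picks a dense sequence $(x_i)$ in $(z,s)$ for a suitable $s$, and declares
$g \preceq h$ according to the first $x_j$ at which $g(x_j)$ and $h(x_j)$ differ in the
canonical order on $[z,t]$. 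The crucial difference from the left-orderable case is that
now the action need \emph{not} be almost free, so ties $g(x_j)=h(x_j)$ for all $j$ are
allowed; this is exactly why one obtains a quasi-order (properties (1)--(3)) rather than
a genuine left-ordering, and two elements $g,h$ are ``equivalent'' precisely when they
agree on an initial subarc emanating from $z$.

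Next I would translate the relation $g \ll h$ into a geometric statement about the
action on the arc. If $g \ll h$, then in the quasi-order all powers $g^k$ stay below $h$
(or below $h^{-1}$), which should force the homeomorphism $h$ to fix an initial arc
$[z,s_h]$ that strictly contains the ``region of influence'' of $g$; more precisely, the
point up to which $h$ acts as the identity dominates the corresponding point for every
power of $g$. The key exponent computation of Lemma \ref{ll} then becomes the engine:
in $SL_3(\mathbb Z)$ the elementary matrices $E_{ij}(1)$ satisfy Steinberg-type
commutator relations such as $[E_{12}(1),E_{23}(1)] = E_{13}(1)$, and passing to a finite
index subgroup $\Gamma$ we can choose powers $a = E_{12}(m)$, $b = E_{23}(m)$,
$c = E_{13}(m^2)$ lying in $\Gamma$ with $[a,b]=c$ and $c$ central among this triple.
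Applying Lemma \ref{ll} gives $c \ll a$ or $c \ll b$.

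I would then set up a descent or fixed-point-propagation argument. The relation
$c \ll a$ means $a$ fixes pointwise an initial arc strictly longer than any arc moved by
powers of $c$; iterating over the family of Steinberg relations and the $SL_2$-subgroups
generated by the $E_{ij}$ should force a common nondegenerate initial arc $[z,s]$ fixed
pointwise by all the relevant generators. The point is that $SL_3(\mathbb Z)$ is generated
by such elementary matrices linked by these commutator relations, so once each generator
is shown to fix an initial arc, and the arcs can be taken with a uniform lower bound on
their length (using that there are only finitely many generators and that $z$ is an end
point so first point maps behave well), the intersection of these arcs is a single
nondegenerate arc $[z,s]$ fixed pointwise by $\Gamma$.

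The main obstacle I anticipate is twofold. First, one must verify carefully that the
naive order from the almost-free case still yields a \emph{bona fide} left quasi-order
when ties are permitted: transitivity and left-invariance must be checked against the
possibility that the first-difference index $j$ is undefined, and one must confirm that
the quasi-order is compatible with the arc geometry so that $g\ll h$ really does produce
a fixed initial arc. Second, and more seriously, converting the abstract conclusion
$c \ll a$ of Lemma \ref{ll} into the pointwise fixing of a \emph{common} arc by the whole
group requires controlling how the fixed initial arcs for different generators interact;
the danger is that the arcs shrink to the single point $z$ as one ranges over generators.
Overcoming this will hinge on the rigidity of the $SL_3(\mathbb Z)$ relations --- in
particular using that the same central element $c$ appears in several commutator
relations --- to pin down one uniform subarc, and this is the step where the hypothesis
$n=3$ (or its finite index subgroups) is genuinely used.
```
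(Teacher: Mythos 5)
Your proposal assembles the right ingredients --- a left quasi-order read off from the action on an initial arc at $z$, and Lemma \ref{ll} applied to Heisenberg-type triples of elementary matrices --- but it stalls exactly at the point you flag as the ``main obstacle,'' and that obstacle is the actual content of the lemma. The paper's resolution is a specific combinatorial device your outline does not contain: it takes \emph{six} elementary unipotents $a_1=u_{1,2}^{r}$, $a_2=u_{1,3}^{r}$, $a_3=u_{2,3}^{r}$, $a_4=u_{2,1}^{r}$, $a_5=u_{3,1}^{r}$, $a_6=u_{3,2}^{r}$ arranged cyclically so that $[a_i,a_{i+1}]=e$ and $[a_{i-1},a_{i+1}]=a_i^{\pm r}$ for every $i\in\mathbb{Z}/6\mathbb{Z}$. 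Each consecutive triple $\Gamma_i=\langle a_{i-1},a_i,a_{i+1}\rangle$ is nilpotent, so Proposition \ref{fixed arcs} (not merely Lemma \ref{fixed point}: one needs a common fixed point of the whole nilpotent subgroup, obtained by induction along a subnormal series) supplies a point $s_i\neq z$ fixed by $\Gamma_i$, and the quasi-order is defined only on $\Gamma_i$, by comparing images of a single point $y$ inside the arc $[z,s_i]$, which $\Gamma_i$ preserves orientation-preservingly. Your order on all of $\Gamma$ by ``first difference along a dense sequence'' would not satisfy left-invariance, since a general element of $\Gamma$ need not preserve the arc or its order. Lemma \ref{ll} then gives $a_i\ll a_{i-1}$ or $a_i\ll a_{i+1}$ for each $i$, and if some $a_1(y)\neq y$ these relations are forced to chain all the way around the hexagon, $a_1\ll a_2\ll\cdots\ll a_6\ll a_1$, yielding $\sup_{k}a_1^{k}(y)\leq\max\{a_1(y),a_1^{-1}(y)\}$ and hence $a_1(y)=y$, a contradiction. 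This closed cycle is precisely what prevents the fixed arcs from ``shrinking to $z$'' as you feared; the upper-triangular Steinberg relations you cite do not close up into a cycle and cannot by themselves produce the contradiction.

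A secondary gap: $\Gamma$ is not generated by the powers $u_{i,j}^{r}$ it contains, so even after the subgroup $\Gamma'=\langle a_1,\dots,a_6\rangle$ is shown to fix an arc $[z,t]$ pointwise, one still needs the Tits--Mennicke theorem that $\Gamma'$ has finite index in $SL_3(\mathbb{Z})$ (hence in $\Gamma$), and then one intersects the finitely many translates $\gamma[z,t]$, $\gamma\in\Gamma$, using that $z$ is a fixed end point, to obtain the $\Gamma$-fixed arc $[z,s]$. Your appeal to ``$SL_3(\mathbb{Z})$ is generated by such elementary matrices'' does not cover this step.
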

\begin{proof}
Since $\Gamma$ has finite index in $SL_{3}(\mathbb{Z})$, there is some positive integer $r$ such that $\Gamma$ contains the following six elements
\[a_1=\begin{bmatrix}1&r&0\\0&1&0\\0&0&1  \end{bmatrix},\ \  a_2=\begin{bmatrix}1&0&r\\0&1&0\\0&0&1  \end{bmatrix},\ \ a_3=\begin{bmatrix}1&0&0\\0&1&r\\0&0&1  \end{bmatrix},  \]
\[a_4=\begin{bmatrix}1&0&0\\r&1&0\\0&0&1  \end{bmatrix},\ \  a_5=\begin{bmatrix}1&0&0\\0&1&0\\r&0&1  \end{bmatrix},\ \ a_6=\begin{bmatrix}1&0&0\\0&1&0\\0&r&1  \end{bmatrix}.  \]

Let $\Gamma_i=\langle a_{i-1}, a_i, a_{i+1}\rangle$ for each $i\in\mathbb{Z}/6\mathbb{Z}$. A straightforward verification shows that $[a_i,a_{i+1}]=e$ and $[a_{i-1},a_{i+1}]=a_{i}^{\pm r}$ for each $i\in\mathbb{Z}/6\mathbb{Z}$. Now, by Lemma \ref{fixed arcs}, there is a point $s_i$ different from $z$ that is fixed by $\Gamma_i$ for each $i\in\mathbb{Z}/6\mathbb{Z}$.

\medskip
For each $i\in\mathbb{Z}/6\mathbb{Z}$, fix  a linear order $\leq_{[z,s_i]}$ on $[z,s_i]$ with $z<s_i$. For every $x\in (z, s_i)$, define a left quasi-order $\preceq_{x}^{(i)}$ on $\Gamma_i$ by setting
\[ \gamma_1\preceq_{x}^{(i)} \gamma_2\text{ if and only if  }  \gamma_1(x)\leq_{[z,s_i]} \gamma_2(x), \text{ for any }\gamma_1,\gamma_2\in \Gamma_i.\]
Set $I=\cap_{i=1}^{6}[z, s_i)$ and take a point $y\not=z\in I$ such that $a_j^{\pm 1}[z,y]\subset I$ for each $j\in\{1,\cdots,6\}$. By Lemma \ref{ll}, for each $i$,
\[ \text{either } a_i\ll_{y}^{(i)} a_{i-1} \text{ or } a_i\ll_{y}^{(i)} a_{i+1}.\]

 {\bf Claim.} $a_{i}(y)=y$ for each $i\in\{1,\cdots,6\}$.

\medskip

{\it Proof of the Claim.} To the contrary, we may assume that $a_1(y)\neq y$.
Note that either  $a_1\ll_{y}^{(1)} a_{6}$ or $a_1\ll_{y}^{(1)} a_{2}$. We discuss into two cases.

\medskip

{\bf Case 1}. $a_1\ll_{y}^{(1)} a_{2}$. Then, by definition, either $a_1^{k}\preceq_{y}^{(1)} a_2$ for all $k\in\mathbb{Z}$ or  $a_1^{k}\preceq_{y}^{(1)} a_2^{-1}$ for all $k\in\mathbb{Z}$. In either case, we have $a_{1}^{k}(y)\leq_{[z,s_1]} \max\{ a_{2}(y), a_{2}^{-1}(y)\}$, for all $k\in\mathbb{Z}$. If $a_2(y)= y$ then $a_1(y)=y$ as well;
this contradicts the assumption. Thus we have that $a_2(y)\neq y$ and hence $a_2(y)\neq a^{-1}_{2}(y)$. So $a_2\not\ll_{y}^{(2)} a_1 $. Further we have that $a_2\ll_{y}^{(2)} a_3$. Similarly, if $a_3(y)=y$ then $a_2(y)=y$, which implies that $a_1(y)=y$; this is a contradiction. Thus $a_3(y)\neq y$ and then $a_3\not\ll_{y}^{(3)} a_2$. Inductively, we have
\[ a_1\ll_{y}^{(1)} a_2\ll_{y}^{(2)} a_3\ll_{y}^{(3)} a_4\ll_{y}^{(4)} a_5\ll_{y}^{(5)} a_6\ll_{y}^{(6)} a_1.\]
By the definitions of these quasi-orders and the choice of $y$, we have
\begin{eqnarray*}
& &\sup_{k\in\mathbb{Z}}a_1^{k}(y)\leq_{I}\max\{a_2(y),a_2^{-1}(y)\} \leq_{I}\max\{a_3(y),a_3^{-1}(y)\}\leq_{I} \max\{a_4(y),a_4^{-1}(y)\} \\
& &\qquad \leq_{I}\max\{a_5(y),a_5^{-1}(y)\} \leq_{I}\max\{a_6(y),a_6^{-1}(y)\} \leq_{I}\max\{a_1(y),a_1^{-1}(y)\},
\end{eqnarray*}
where $\leq_{I}$ is the natural linear order on $I$ with respect to which $z$ is minimal. Thus we have $a_1(y)=y$, which contradicts our assumption.
So the claim holds in this case.

\medskip
{\bf Case 2}. $a_1\ll_{y}^{(1)} a_{6}$. Similar to Case 1,  we have
\[ a_1\ll_{y}^{(1)} a_6\ll_{y}^{(6)} a_5\ll_{y}^{(5)} a_4\ll_{y}^{(4)} a_3\ll_{y}^{(3)} a_2\ll_{y}^{(2)} a_1,\]
and hence
\begin{eqnarray*}
&& \sup_{k\in\mathbb{Z}}a_1^{k}(y)\leq_{I}\max\{a_6(y),a_6^{-1}(y)\} \leq_{I}\max\{a_5(y),a_5^{-1}(y)\}\leq_{I} \max\{a_4(y),a_4^{-1}(y)\} \\
&& \leq_{I}\max\{a_3(y),a_3^{-1}(y)\} \leq_{I}\max\{a_2(y),a_2^{-1}(y)\} \leq_{I}\max\{a_1(y),a_1^{-1}(y)\}.
\end{eqnarray*}
We also have $a_1(y)=y$ and the claim  holds in this case.

\medskip
Let $\Gamma'=\langle a_1,a_2,a_3,a_4,a_5,a_6\rangle$. By a result of Tits in \cite{Tits} (also refer to \cite{Men}), we know that $\Gamma'$ has finite index in $\Gamma$.  Note that the claim holds for any  $y\in I$ with $a_j^{\pm 1}[z,y]\subset I$ for each $j\in\{1,\cdots,6\}$. Thus there is a $t\in X\setminus\{z\}$ such that the arc $[z,t]$ is fixed by $\Gamma '$ pointwise. Now the set $\Gamma [z, t]=\{\gamma [z,t]: \gamma\in \Gamma\}$ consists of finitely many arcs. Since $z$ is an end point fixed by $\Gamma$, there is some $s\in X\setminus\{z\}$ with $[z,s]=\cap \Gamma [z, t]$; and $[z,s]$ is then fixed by $\Gamma$ pointwise.
\end{proof}

\begin{prop}\label{key lem}
Suppose that $\Gamma$ is a finite index subgroup of $SL_{n}(\mathbb{Z})$ with $n\geq 3$. If $\Gamma$ acts on a nondegenerate dendrite $X$ and fixes an end point $z$, then there is a point $s\in X\setminus\{z\}$ such that the arc $[z, s]$ is fixed by $\Gamma$ pointwise.
\end{prop}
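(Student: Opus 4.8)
The plan is to reduce the general case to the already-established rank-three case, Lemma \ref{key lem n=3}, by covering $\Gamma$ with finitely many embedded copies of $SL_3(\mathbb{Z})$ and then patching together the arcs they fix. First I would fix, exactly as in the proof of Lemma \ref{key lem n=3}, a positive integer $r$ such that $\Gamma$ contains every elementary matrix $E_{pq}(r)=I_n+r\,e_{pq}$ with $p\neq q$ (each one-parameter subgroup $\{E_{pq}(m)\}_{m\in\mathbb{Z}}$ meets the finite-index subgroup $\Gamma$ in a nontrivial cyclic group, and one takes a common $r$). For each unordered triple $\tau=\{p,q,\ell\}$ of distinct indices in $\{1,\dots,n\}$ --- such triples exist because $n\geq 3$ --- the six matrices $E_{pq}(r),E_{qp}(r),E_{q\ell}(r),E_{\ell q}(r),E_{p\ell}(r),E_{\ell p}(r)$ generate a subgroup $H_\tau\leq\Gamma$ lying inside the copy of $SL_3(\mathbb{Z})$ supported on the block indexed by $\tau$; by the result of Tits used in Lemma \ref{key lem n=3}, $H_\tau$ has finite index in that copy. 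The restriction of the $\Gamma$-action to $H_\tau$ is therefore an action of a finite-index subgroup of $SL_3(\mathbb{Z})$ on the nondegenerate dendrite $X$ fixing the end point $z$, so Lemma \ref{key lem n=3} supplies a point $s_\tau\neq z$ with $[z,s_\tau]$ fixed pointwise by $H_\tau$.

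Next I would intersect these arcs. Since $z$ is an end point, $X\setminus\{z\}$ is connected, so any two arcs issuing from $z$ share a nondegenerate initial segment; consequently the intersection $[z,s']:=\bigcap_\tau [z,s_\tau]$, taken over the finitely many triples, is again a nondegenerate arc. It is fixed pointwise by every $H_\tau$, and since each generator $E_{pq}(r)$ lies in some $H_\tau$ (choose any third index $\ell$), the subgroup $\Gamma'=\langle E_{pq}(r):p\neq q\rangle$ fixes $[z,s']$ pointwise. Because the elementary matrices generate a finite-index subgroup of $SL_n(\mathbb{Z})$ for $n\geq 3$, the group $\Gamma'$ has finite index in $\Gamma$.

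Finally I would descend from $\Gamma'$ to $\Gamma$ exactly as at the end of Lemma \ref{key lem n=3}. The setwise stabilizer of $[z,s']$ contains $\Gamma'$ and hence has finite index, so $\Gamma[z,s']$ consists of finitely many arcs through $z$; their intersection $[z,s]:=\bigcap\Gamma[z,s']$ is, by the end-point argument once more, a nondegenerate arc, and it is $\Gamma$-invariant setwise because $\gamma\bigcap_{\gamma'}\gamma'[z,s']=\bigcap_{\gamma'}\gamma'[z,s']$ for every $\gamma\in\Gamma$. Each $\gamma$ then restricts to a self-homeomorphism of $[z,s]$ fixing the end point $z$, hence fixing both endpoints and preserving orientation; by Theorem \ref{non left order} this orientation-preserving action of the higher rank lattice $\Gamma$ on the arc $[z,s]\cong[0,1]$ is trivial, whence $[z,s]$ is fixed by $\Gamma$ pointwise, as required.

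I expect the only genuine subtleties to be organizational rather than dynamical: verifying that the finitely many $SL_3$-fixed arcs share a common nondegenerate sub-arc (this is precisely where the order-one hypothesis on $z$ is used, through the connectedness of $X\setminus\{z\}$) and confirming that $\Gamma'$ is truly of finite index so that the passage to $\Gamma$ and the final appeal to Theorem \ref{non left order} are legitimate. The substantive work --- producing a pointwise-fixed arc for a single rank-three copy via the quasi-order and commutator relations --- is already packaged in Lemma \ref{key lem n=3}, so the main obstacle here is simply the careful covering-and-patching bookkeeping rather than a new dynamical estimate.
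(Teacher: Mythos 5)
Your proposal is correct and follows essentially the same route as the paper: cover the elementary generators $u_{p,q}^{r}$ of a finite-index subgroup by finitely many rank-three blocks, obtain from Lemma \ref{key lem n=3} an arc at $z$ fixed pointwise by each block group, intersect these finitely many arcs (nondegenerate because $z$ is an end point), invoke Tits--Mennicke for the finite index of $\Gamma'$, and finally intersect the finite $\Gamma$-orbit of the resulting arc. The only cosmetic difference is that you apply the statement of Lemma \ref{key lem n=3} to each embedded finite-index subgroup of $SL_3(\mathbb{Z})$, whereas the paper reruns that lemma's proof on the explicit six-generator groups $\Gamma_{i,j}$, and you make explicit (via Theorem \ref{non left order}) the last step that the paper leaves implicit.
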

\begin{proof}
Let $u_{i,j}$ be the matrix in $SL_{n}(\mathbb{Z})$ with $1$'s along the diagonal and at the entry $(i,j)$ and $0$'s elsewhere.  Since $\Gamma$ has finite index in $SL_{n}(\mathbb{Z})$, there is an $\ell\in\mathbb{Z}_{+}$ such that $u_{i,j}^{\ell}\in \Gamma$ for each $i,j\in\{1,\cdots,n\}, i\neq j$. Given $1\leq i<j\leq n-1$, let
\[ a_1=u_{i,j}^{\ell},\ \ a_2=u_{i,j+1}^{\ell},\ \ a_3=u_{j,j+1}^{\ell},\ \  a_4=u_{j,i}^{\ell},\ \  a_5=u_{j+1,i}^{\ell},\ \  a_6=u_{j+1,j}^{\ell}.\]
A straightforward check shows that they also satisfy  that $[a_i,a_{i+1}]=e$ and $[a_{i-1},a_{i+1}]=a_{i}^{\pm r}$ for each $i\in\mathbb{Z}/6\mathbb{Z}$.
Applying the proof of Lemma \ref{key lem n=3} to the group $\Gamma_{i,j}=\langle a_1,\cdots, a_6\rangle$, there is some $s_{i,j}\in X\setminus\{z\}$ such that $\Gamma_{i,j}$ fixes the arc $[z,s_{i,j}]$ pointwise. Let $t\in X\setminus\{z\}$ be such that $[z,t]=\bigcap_{1\leq i<j\leq n-1}[z,s_{i,j}]$. Then the arc $[z,t]$ is fixed pointwise by $\Gamma_{i,j}$, for each $1\leq i<j\leq n-1$. Now let $\Gamma'=\langle u_{i,j}^{\ell}: 1\leq i,j\leq n, i\neq j\rangle$. Then $[z,t]$ is fixed by $\Gamma'$ pointwise, by noting that $\Gamma'=\langle \Gamma_{i,j}: 1\leq i<j\leq n-1\rangle$. Recall that $\Gamma'$ has finite index in $SL_{n}(\mathbb{Z})$ and hence in $\Gamma$ (see \cite{Tits} or \cite{Men}).  Thus the set $\Gamma [z, t]=\{\gamma [z,t]: \gamma\in \Gamma\}$ consists of finitely many arcs. Since $z$ is an end point fixed by $\Gamma$, there is some $s\in X\setminus\{z\}$ such that $[z,s]=\cap\Gamma [z, t]$ and $[z,s]$ is fixed by $\Gamma$ pointwise.
\end{proof}

Now, we are ready to prove the second main theorem. Though the following two lemmas are known to experts in group theory, we afford the proofs for the convenience of the readers.

 \begin{lem}\label{finite index normal}
Let $H$ be a subgroup of $G$ of finite index. Then there is a normal subgroup $K$ of $G$ that is contained in $H$ and has finite index in $G$. 
\end{lem}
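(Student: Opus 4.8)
The plan is to realize $K$ as the kernel of the natural permutation action of $G$ on the coset space $G/H$; equivalently, $K$ will be the \emph{normal core} $\bigcap_{g\in G} gHg^{-1}$ of $H$ in $G$, which is by construction the largest normal subgroup of $G$ contained in $H$.

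First I would set $n=[G:H]$ and let $G$ act on the finite set $G/H=\{xH:x\in G\}$ of left cosets by left multiplication, $g\cdot xH=(gx)H$. This action is well defined and yields a group homomorphism $\phi:G\rightarrow \mathrm{Sym}(G/H)$ into the symmetric group on $G/H$. Since $|G/H|=n$ is finite, $\mathrm{Sym}(G/H)$ is isomorphic to the finite group $S_n$.

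Next I would put $K=\ker\phi$ and check the three required properties. Being the kernel of a homomorphism, $K$ is automatically a normal subgroup of $G$. Because $\phi$ induces an embedding of $G/K$ into $\mathrm{Sym}(G/H)$, we get $[G:K]\leq n!<\infty$, so $K$ has finite index in $G$. Finally, if $g\in K$ then $\phi(g)$ fixes every coset; in particular it fixes the distinguished coset $H$, so $g\cdot H=H$ and hence $g\in H$. This shows $K\subseteq H$.

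There is no genuine obstacle here, since the argument is entirely standard; the only point deserving a moment's care is to use \emph{left} cosets and to note that membership in the kernel forces fixing the base coset $H$, which is exactly what yields $K\subseteq H$. As an alternative I could argue directly with the normal core: each conjugate $gHg^{-1}$ has index $n$, there are at most $[G:H]$ distinct such conjugates (they are indexed by $G/N_G(H)$ and $H\subseteq N_G(H)$), and a finite intersection of finite-index subgroups again has finite index, so $\bigcap_{g\in G} gHg^{-1}$ is a normal finite-index subgroup contained in $H$.
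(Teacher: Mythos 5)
Your proof is correct and takes exactly the same route as the paper: both realize $K$ as the kernel of the permutation representation $\phi\colon G\to\mathrm{Sym}(G/H)$, giving normality, index at most $n!$, and containment in $H$ via fixing the base coset. Your extra remark identifying this kernel with the normal core $\bigcap_{g\in G}gHg^{-1}$ is a harmless addition.
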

\begin{proof}
Assume that the index of $H$ in $G$ is $n>0$. Then the canonical action of $G$ on the coset space $G/H$ induces a homomorphism $\phi: G\rightarrow {\rm Sym}(n)$, where ${\rm Sym}(n)$ denotes the permutation group of $n$ elements. So $\ker(\phi)\leq H$ and $\ker(\phi)\trianglelefteq G$. Further, $[G:\ker(\phi)]\leq n!$. Thus $\ker(\phi)$ is just what we are looking for.
\end{proof}

\begin{lem} \label{SL}
Let $N$ be a positive integer and $(\Gamma_{\alpha})_{\alpha\in A}$ be a family of finite indexed normal subgroups of $SL_n(\mathbb{Z})$ with $n\geq 3$.  If  for each $\alpha\in A$, $[SL_{n}(\mathbb{Z}): \Gamma_{\alpha}]\leq N$, then the intersection $\bigcap_{\alpha\in A}\Gamma_{\alpha}$ also has finite index in $SL_{n}(\mathbb{Z})$.
\end{lem}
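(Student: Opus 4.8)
The plan is to reduce the (possibly infinite) intersection to a finite one by exploiting the finite generation of $SL_n(\mathbb{Z})$. The two ingredients I would use are: $SL_n(\mathbb{Z})$ is finitely generated (for instance by the elementary matrices $u_{i,j}$ appearing in Proposition \ref{key lem}), and a finitely generated group possesses only finitely many subgroups of any prescribed bounded index. Granting these, the family $(\Gamma_\alpha)_{\alpha\in A}$, whose members all have index at most $N$, can contain only finitely many \emph{distinct} subgroups, so the intersection is really a finite one.

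First I would establish the counting fact: if $G$ is generated by $k$ elements, then for each positive integer $m$ the set of subgroups of $G$ of index exactly $m$ is finite, and hence the set of subgroups of index at most $N$ is finite. The standard argument runs as follows. A subgroup $H\leq G$ of index $m$ determines the left-multiplication action of $G$ on the coset space $G/H$; after labelling the $m$ cosets this is a homomorphism $G\rightarrow {\rm Sym}(m)$, exactly as in the construction in Lemma \ref{finite index normal}, and $H$ is recovered as the stabilizer of the coset $H$ under this action. Since $G$ is generated by $k$ elements, a homomorphism $G\rightarrow {\rm Sym}(m)$ is determined by the images of these generators, so there are at most $(m!)^k$ of them. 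Consequently there are at most $\sum_{m\leq N}(m!)^k$ subgroups of $G$ of index at most $N$.

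Applying this to $G=SL_n(\mathbb{Z})$ with the given bound $N$, the family $(\Gamma_\alpha)_{\alpha\in A}$ consists of only finitely many distinct subgroups, say $\Gamma_{\alpha_1},\dots,\Gamma_{\alpha_p}$. Hence $\bigcap_{\alpha\in A}\Gamma_\alpha=\bigcap_{j=1}^{p}\Gamma_{\alpha_j}$ is a finite intersection. To finish I would invoke the elementary fact that a finite intersection of finite index subgroups again has finite index, which follows from $[G:H_1\cap H_2]\leq [G:H_1]\,[G:H_2]$ by iteration; in particular $[SL_n(\mathbb{Z}):\bigcap_{\alpha\in A}\Gamma_\alpha]\leq N^{p}<\infty$.

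I do not anticipate a serious obstacle here; the essential point is the uniform index bound together with finite generation, and neither the normality of the $\Gamma_\alpha$ nor the hypothesis $n\geq 3$ is actually needed for this argument. The only mild care required is in the counting step, making sure that distinct subgroups of a given index are correctly accounted for; but since every subgroup of index $m$ arises as a point stabilizer of at least one of the finitely many homomorphisms into ${\rm Sym}(m)$, the finiteness is immediate.
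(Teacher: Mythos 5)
Your argument is correct, but it takes a genuinely different route from the paper's. You reduce the possibly infinite intersection to a finite one via the standard counting fact (going back to M.~Hall) that a finitely generated group has only finitely many subgroups of index at most $N$ --- each such subgroup being recovered as a point stabilizer of one of the at most $(m!)^k$ homomorphisms to $\mathrm{Sym}(m)$ for some $m\leq N$ --- and then finish with Poincar\'e's bound $[G:H_1\cap H_2]\leq [G:H_1]\,[G:H_2]$. The paper instead exhibits an explicit congruence subgroup inside every $\Gamma_\alpha$: since $[SL_n(\mathbb{Z}):\Gamma_\alpha]\leq N$, each elementary matrix has a power $u_{i,j}^{\ell}$ with $\ell\leq N$ lying in $\Gamma_\alpha$, and by the theorem of Mennicke and Tits (this is precisely where $n\geq 3$ enters) these powers generate the principal congruence subgroup $\Gamma(\ell^2)$, so that with $k=\mathrm{lcm}(1,\dots,N)$ one gets $\Gamma(k^2)\subseteq\bigcap_{\alpha}\Gamma_\alpha$. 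Your route is more elementary and more general: as you correctly observe, it needs neither the normality of the $\Gamma_\alpha$ nor $n\geq 3$, only that $SL_n(\mathbb{Z})$ is finitely generated. The paper's route buys a concrete uniform congruence subgroup sitting inside the intersection, at the cost of invoking the congruence-subgroup machinery; for the application in the proof of Theorem \ref{main theorem 2}, where only finiteness of the index is used, either proof suffices.
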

\begin{proof}
Let $\Gamma=SL_{n}(\mathbb{Z})$ and for each positive integer $k$, let $\Gamma(k)$ be the principal congruence subgroup of level $k$, i.e. the kernel of $SL_n(\mathbb{Z})\rightarrow SL_n(\mathbb{Z}/k\mathbb{Z})$.
Let $u_{i,j}$ be the matrix in $SL_{n}(\mathbb{Z})$ with $1$'s along the diagonal and at the entry $(i,j)$ and $0$'s elsewhere.

\medskip
Fix a $\Gamma_{\alpha}$.  Since $[SL_{n}(\mathbb{Z}): \Gamma_{\alpha}]\leq N$, there exists $\ell_{i,j}$ such that $e_{i,j}^{\ell_{i,j}}\in\Gamma_{\alpha}$ for each $i,j\in\{1,\cdots,N\}$. Let $\ell$ be the least common multiple of $\ell_{i,j}$. Thus $e_{i,j}^{\ell}\in\Gamma_{\alpha}$ for each $i,j$.  By the result of Mennicke in \cite{Men} that $\{e_{i,j}^{\ell}: 1\leq i,j\leq N\}$ generates $\Gamma(\ell^2)$, we have $ \Gamma(\ell^2)\subset\Gamma_{\alpha}$. Now let $k$ be the least common multiple of $\{1,\cdots,N\}$. Then we have $\Gamma(k^2)\subset \bigcap_{\alpha\in A}\Gamma_{\alpha}$, which implies the later one has finite index in $SL_{n}(\mathbb{Z})$.
\end{proof}

\begin{proof}[Proof of Theorem \ref{main theorem 2}]

We first claim that there is a  fixed point of $\Gamma$. If not, $\Gamma$  leaves an arc $I$ invariant  by Theorem \ref{higher rank dendrite}.
Then  there is $\gamma\in \Gamma$ and a subgroup $\Gamma'\leq \Gamma$ such $\Gamma=\Gamma'\cup \gamma\Gamma'$ and $\Gamma'$ preserves the orientation of $I$.
By Theorem \ref{non left order}, the restriction action of $\Gamma'$ to $I$ is trivial. Note that $\gamma$ has a fixed point $p$ in $I$, which is then a fixed point of
 $\Gamma$. Thus the claim holds.

\medskip

In the following, we will define, for each $i=0, 1, 2, \cdots,$ a subdendrite $X_i$ of $X$ and a normal subgroup $\Gamma_i$ of $\Gamma$ with finite index, satisfying
that:
\begin{itemize}
\item[(1)] $\{p\}=X_0\subset X_1\subset X_2\subset\cdots$, and $\Gamma=\Gamma_0\supset\Gamma_1\supset\Gamma_2\supset\cdots;$
\item[(2)] $X_i$ is contained in ${\rm Fix}(\Gamma_i)$ and is $\Gamma$-invariant, and $(X_i, \Gamma|X_i)$ is a finite action;
\item[(3)] the first point map $\phi_{i}: X_{i+1}\rightarrow X_i$ is a factor map from $(X_{i+1}, \Gamma|X_{i+1})$ to $(X_i, \Gamma|X_i)$;
\item[(4)] for any $i\in\{0,\cdots, m-1\}$, the image of $X\setminus X_{i+1}$ under the first point map $r_{i+1}: X\rightarrow X_{i+1}$ is contained in $X_{i+1}\setminus \{x\in X_{i}: {\rm ord}(x)\leq i+1\}$.
\end{itemize}
\medskip

First take a fixed point $p$ of $\Gamma$. Let $X_0=\{p\}$ and let $\Gamma_0=\Gamma$; then $X_0$ is $\Gamma_0$-invariant. Since the order of $p$ is finite, the number of connected components of $X\setminus\{p\}$
is finte; thus there is a finite index normal subgroup $\Gamma_1$ of $\Gamma$ which leaves each component of $X\setminus\{p\}$ invariant.
For each such component $C$, applying Proposition \ref{key lem} to $(C\cup\{p\}, \Gamma_1|C\cup\{p\})$, there is a point $s_{C}\in C$ such that the arc $[p, s_{C}]$ is fixed pointwise by  $\Gamma_1$ (note that $p$ is an end point of $C\cup\{p\}$).
\medskip

Let $X_1$ be the connected component of ${\rm Fix}(\Gamma_{1})$ that contains $p$. Then $X_1$ is a nondegenerate subdendrite of $X$.  Since $p$ is fixed by $\Gamma$ and $\Gamma_1$ is normal in $\Gamma$,  $X_1$ is $\Gamma$-invariant. Thus the action of $\Gamma$ on $X_1$ factors through $\Gamma/\Gamma_1$ action, which is finite.
It is clear that the first point map $\phi_0:X_1\rightarrow X_0$ satisfies $\phi_0(gx)=g\phi_0(x)=p$ for each $g\in G$ and $x\in X_1$.

\medskip
Now suppose that we have defined $X_0,\cdots, X_m$ and $\Gamma=\Gamma_0\trianglerighteq\Gamma_1\trianglerighteq\cdots\trianglerighteq\Gamma_m$, for some positive integer $m$, satisfying (1)-(4) above. Let $B_m=\{x\in X_m: {\rm ord}(x)\leq m+1\}$ and let $r_{m}: X\rightarrow X_m$ be the first point map.  For each point $x\in B_{m}$, the preimage $Y_{x}:=r_{m}^{-1}(\{x\})$ is a sub-dendrite of $X$ and has a unique intersecting point $x$ with $X_m$.
By assumption, every $x\in X_m$ is fixed by $\Gamma_m$. For each $x\in B_m$, the cardinality of the set ${\rm Comp}(Y_x-x)$  of components of $Y_x\setminus\{x\}$ is no greater than $m+1$. Thus there is a finite index subgroup $\Gamma_{m, x}$ of $\Gamma_m$ such that $\Gamma_{m,x}$ leaves each component of $Y_x\setminus\{x\}$ invariant.  By Lemma \ref{finite index normal}, for each $x\in B_m$, there is a subgroup $\Gamma_{m,x}'\leq \Gamma_{m,x}$ which is normal and has finite index in $\Gamma$. Applying Lemma \ref{SL}, the subgroup
$\Gamma_{m+1}=\bigcap_{x\in B_{m}}\Gamma_{m,x}'\leq \Gamma_m$ has finite index in $SL_{n}(\mathbb{Z})$ and is normal in $\Gamma$.

\medskip
By the definition of $\Gamma_{m+1}$, $X_m$ is contained in the fixed point set of $\Gamma_{m+1}$.  Let $X_{m+1}$ be the connected component of ${\rm Fix}(\Gamma_{m+1})$ that contains $X_{m}$.  Since $p\in X_{m+1}$ is fixed by $\Gamma$ and $\Gamma_{m+1}$ is normal in $\Gamma$, we have that $\Gamma$ leaves $X_{m+1}$ invariant.
By Proposition \ref{key lem},  $X_{m+1}$ satisfies (4). It is clear that (1)-(3) hold for $X_{m+1}$.

\medskip
In such way, we inductively defined the desired sequence $X_0\subset X_1\subset\cdots$ and $\Gamma=\Gamma_0\trianglerighteq\Gamma_1\trianglerighteq\cdots$ satisfying (1)-(4). Let $Y=\overline{\cup_{i=0}^\infty X_i}$. Then $Y$ is a $\Gamma$-invariant dendrite and topologically conjugate to the inverse limit $(\underset{\longleftarrow}{\lim}(X_i, \Gamma), \Gamma))$
by \cite[Theorem 10.36]{Nad}. Since each  $(X_i, \Gamma|X_i)$ is a finite action, $(Y, \Gamma)$ is almost finite.

\medskip

It remains to show that the first point map $r: X\rightarrow Y$ is a contractible extension. It is obvious in case of $Y=X$. So we suppose that $Y\neq X$.
\medskip

{\bf Claim 1.} For each $m$ and $x\in X_{m+1}\setminus X_m$, $X\setminus \{r_m(x)\}$ has at least two distinct connected components,
where $r_m:X\rightarrow X_m$ is the first point map.
\medskip

{\it Proof of Claim 1.} Since $x\in X_{m+1}\setminus X_m$, $[x, r_m(x))\cap {\rm Fix}(\Gamma_m)=\emptyset.$
This together with Proposition \ref{key lem} implies that there exists $\gamma\in\Gamma_m$ such that $\gamma[x, r_m(x))\cap [x, r_m(x))=\emptyset$. Then
the components of $X\setminus\{r_m(x)\}$ containing $x$ and $r_m(x)$ respectively are distinct.
\medskip

{\bf Claim 2.} For any $x\in X\setminus Y$, the orbit of $r(x)$ is infinite.
\medskip

{\it Proof of Claim 2.} By the definition of $X_m$ and Proposition \ref{key lem}, we see that $r(x)\in Y\setminus \cup_{i=0}^\infty X_i$.
From Property (4), there is a subsequence $(m_j)$ with $\gamma_{m_{j+1}}(x)\in X_{m_{j+1}}\setminus X_{m_j},$ where $r_{m_j}:X\rightarrow X_{m_j}$
is the first point map. It follows from Claim 1 that there are $g_{m_j}\in\Gamma_{m_j}$ and $C_{m_j}\not=C_{m_j}'\in {\rm Comp}(X\setminus \{r_{m_j}(x)\})$
such that $r_{m_{j+1}}(x)\in C_{m_j}$ and $g_{m_j}(r_{m_{j+1}}(x))\in C_{m_j}'$. Note that $C_{m_1}\supset C_{m_2}\supset C_{m_3}\supset\cdots$
and $C_{m_j}'\subset C_{m_{j-1}}$ for each $j$. Thus we  have
\begin{eqnarray*}
&r_{m_{j+1}}(x)\in C_{m_j},\\
&g_{m_j}r_{m_{j+1}}(x)\in C_{m_j}'\subset C_{m_{j-1}},\\
&g_{m_{j-1}}g_{m_j}r_{m_{j+1}}(x)\in C_{m_{j-1}}'\subset C_{m_{j-2}},\\
&\cdots\cdots\cdots\\
& g_{m_1}g_{m_2}\cdots g_{m_j}r_{m_{j+1}}(x)\in C_{m_1}',
\end{eqnarray*}
which imply that the arcs
$$[r(x), r_{m_{j+1}}(x)], ~g_{m_j}[r(x), r_{m_{j+1}}(x)],~ \cdots, ~ g_{m_1}g_{m_2}\cdots g_{m_j}[r(x), r_{m_{j+1}}(x)]$$
are pairwise disjoint. In particular,
$$r(x), ~ g_{m_j}r(x),~ g_{m_{j-1}}g_{m_j}r(x),~\cdots, ~g_{m_1}g_{m_2}\cdots g_{m_j}r(x)$$
 are pairwise distinct. By the arbitrariness of $j$, we see that
 the orbit of $r(x)$ is infinite.
 \medskip

Noting that any sequence of mutually disjoint subcontinua  of a dendrite forms a null sequence (see \cite[V.2.6]{Why}), the contractibility of $r$ follows from Claim 2.

\end{proof}

\section{Examples and questions}\label{examples}

As supplements to the main theorems, we give some examples in this section.

\medskip

\begin{exa}  For each $i\not=0\in \mathbb Z$, let $\theta_i={\rm sgn}(i)(1-\frac{1}{2|i|})\pi$; and let
 $I_i$ be the arc in the complex plane defined by $I_i=\{re^{{\rm i}\theta_i}: 0\leq r\leq \frac{1}{|i|}\}$.
 Let $X=\cup_{i\in\mathbb Z\setminus\{0\}}I_i$ and take the subspace topology of $\mathbb C$. Then $X$ is a
dendrite of infinite order. If $G$ is a countably infinite group, then it can act on the end point set of
$X$ transitively and freely. We then extend this action to $X$ by letting $g$ map $I_i$ affinely
to $I_{g(i)}$ for each $g\in G$ and $i\not=0\in \mathbb Z$. Clearly, the extended action is almost free. So, the condition
``with no infinite order points" in Theorem \ref{main theorem 1} cannot be removed.
\end{exa}

The following example indicates that the exact analogy to the Zimmer's rigidity for higher rank lattice actions on the circle
does not hold for higher rank lattice actions on dendrites of finite order.

\begin{exa} \label{almost finite}
Let $\Gamma=SL_{n}(\mathbb{Z})$ with $n\geq 3$. Fix a prime number $p$. For each positive integer $\alpha$, let
 $ \phi_{\alpha}: SL_{n}(\mathbb{Z})\rightarrow SL_{n}(\mathbb{Z}/p^{\alpha}\mathbb{Z})$
 denote the canonical homomorphism and let $\Gamma_{\alpha}=\ker(\phi_{\alpha}).$
Then $\Gamma_{\alpha}$ is a finite index subgroup of $\Gamma$, the so called {\it principal congruence subgroup} of $\Gamma$.
It is known that for each $\alpha$ the index
\[ [\Gamma_{\alpha}: \Gamma_{\alpha+1}]=p^{n^2-1} .\]
Set $\Gamma_0=\Gamma$. The sequence $(\Gamma_{\alpha})_{\alpha=0}^{\infty}$ forms a {\it group chain } of $\Gamma$:
\[ \Gamma_0>\Gamma_1>\Gamma_2>\cdots .\]
Thus we get a sequence of finite sets $\{\Gamma/\Gamma_{\alpha}: \alpha=0, 1, 2, \cdots\}$ on which $G$ acts by left translations.

\medskip
Now we associate to each $\alpha$ a finite combinatorial tree $Y_\alpha$ whose end point set is  $\Gamma/\Gamma_{\alpha}$,
and a finite action of $G$ on $Y_\alpha$ whose restriction to $\Gamma/\Gamma_{\alpha}$ coincides with the left translation action.
Precisely, let $Y_0=\{V_0\}={\Gamma/\Gamma_0}$ and let $\Gamma$ act on it trivially. Assume that for each $0\leq \beta\leq \alpha$,
$Y_\beta$ and the finite action of $G$ on it is defined. We let $Y_{\alpha+1}$ be the union of $Y_\alpha$ and the
set of  edges:
\[\{(\gamma\Gamma_{\alpha},  \gamma\gamma_1\Gamma_{\alpha+1}),(\gamma\Gamma_{\alpha},  \gamma\gamma_2\Gamma_{\alpha+1} ),\cdots, (\gamma\Gamma_{\alpha},  \gamma\gamma_{p^{n^{2}-1}}\Gamma_{\alpha+1}): \gamma\Gamma_{\alpha}\in\Gamma/\Gamma_{\alpha}\}, \]
where $\gamma_1,\cdots,\gamma_{p^{n^2-1}}$ is a set of coset representatives of $\Gamma_{\alpha+1}$ in $\Gamma_{\alpha}$.
Then we extend the action of $G$ from $Y_{\alpha}$ to $Y_{\alpha+1}$ by letting $g.\gamma\Gamma_{\alpha+1}=(g\gamma)\Gamma_{\alpha+1}$
for any $g, \gamma\in\Gamma$.

\medskip

For each $\alpha$, let $T_\alpha$ be the geometric realization of  $Y_\alpha$, which is a finite topological tree.
Then $G$ induces a finite action on each $T_\alpha$ in a canonical way.
 Let $\psi_{\alpha}: T_{\alpha+1}\rightarrow T_{\alpha}$ be a continuous surjective map defined by
\begin{itemize}
\item[(1)]  $\psi_{\alpha}|_{T_{\alpha}}={\rm id_{T_{\alpha}}}$;
\item[(2)] for each arc $[u,v]$ with $u\in V_{\alpha}, v\in V_{\alpha+1}$, $\psi_{\alpha}$ maps the whole arc $[u,v]$ to $u$.
\end{itemize}
From the definition,  we see that $\psi_{\alpha}(gx)=g\psi_{\alpha}(x)$ for  $x\in T_\alpha$ and $g\in \Gamma$. Thus
we get an inverse system $\{(T_\alpha, \Gamma): \alpha=0, 1, 2,\cdots\}$ with bonding maps $\psi_\alpha$.
Since each $\psi_\alpha$ is  monotone and onto,  $\underset{\longleftarrow}{\lim}(T_\alpha, \Gamma)$
is a dendrite by \cite[Theorem 10.36]{Nad}, which is of finite order by the construction. Clearly, the inverse limit
 $(\underset{\longleftarrow}{\lim}(T_\alpha, \Gamma), \Gamma)$ is almost finite.

\end{exa}

The following example shows that there does exist a non-almost-finite action by $SL_n(\mathbb Z)$ with $n\geq 3$ on a
dendrite of finite order.

\begin{exa} \label{not almost finite}
Let $(X, \Gamma)$ be the system constructed in Example \ref{almost finite}. Since dendrites are planar continua, we may assume that
$X$ is contained in $\mathbb R^2\times \{0\}\subset\mathbb R^3$. Fix an end point $e$ of $X$ and label its orbit $\Gamma e$ as $\{e_i: i=1,2,\cdots\}$.
Suppose the coordinate of $e_i$ is $(x_i, y_i, 0)$. Let $I_i=\{(x_i, y_i, t):0\leq t\leq 1/i\}$ for each $i$,
and let $Z=X\cup (\cup_{i=1}^\infty I_i)$. Then $Z$ is a dendrite of finite order contained in $\mathbb R^3$.
Extend the action of $\Gamma$ from $X$ to $Z$ by letting $g$ map $I_i$ to $I_j$ affinely if $e_j=ge_i$, for each $g\in\Gamma$.
Then we get an action $(Z, \Gamma)$, which is  not almost finite.
\end{exa}

We suspect that the action in Example \ref{not almost finite} is the only possible model for higher rank lattice actions on dendrites without infinite order points, and Theorem \ref{main theorem 2} has already supported this point of view. We end the paper with the following

\begin{ques}
Let $\Gamma$ be a higher rank lattice acting on a nondegenerate dendrite $X$ with no infinite order points. Does there exist a nondegenerate $\Gamma$-invariant subdendrite $Y$ satisfying the following items?

(1) There is   an inverse system of finite actions $\{(Y_i, G):i=1,2,3,\cdots\}$ with monotone bonding maps $\phi_i: Y_{i+1}\rightarrow Y_i$ and with each $Y_i$ being a dendrite, such that $(Y, G|Y)$ is topologically conjugate to the inverse limit $(\underset{\longleftarrow}{\lim}(Y_i, \Gamma), \Gamma)$.

(2) The first point map $r:X\rightarrow Y$ is a factor map from $(X, \Gamma)$ to $(Y, \Gamma|Y)$; if $x\in X\setminus Y$, then $r(x)$ is an end point of $Y$ with infinite orbit; for each $y\in Y$, $r^{-1}(y)$ is contractible, that is there is a sequence $g_i\in G$ with ${\rm diam}(g_ir^{-1}(y))\rightarrow 0$.
\end{ques}


\end{document}